\theoremstyle{definition}
\newtheorem{definition}{Definition}[section]
\newtheorem{notation}[definition]{Notation}
\newtheorem{example}[definition]{Example}
\newtheorem{remark}[definition]{Remark}
\theoremstyle{plain}
\newtheorem{lemma}[definition]{Lemma}
\newtheorem{theorem}[definition]{Theorem}
\newcommand{\calO}{\mathcal{O}}
\newcommand{\calQ}{\mathcal{Q}}
\newcommand{\opl}{\dashv}
\newcommand{\opr}{\vdash}
\begin{document}

\title[Self-dual nonsymmetric operads]
{Self-dual nonsymmetric operads\\with two binary operations}

\author{Murray Bremner}

\address{Department of Mathematics and Statistics, University of Saskatchewan, Saskatoon, Canada}

\email{bremner@math.usask.ca}

\author{Juana S\'anchez-Ortega}

\address{Department of Mathematics and Applied Mathematics, University of Cape Town, South Africa}

\email{juana.sanchez-ortega@uct.ac.za}

\subjclass[2010]{Primary 18D50. Secondary 13P10, 15A54, 16S37, 17A30, 68W30.}

% 18D50 - Operads

% 13P10 - Gröbner bases
% 15A54 - Matrices over function rings in one or more variables
% 16S37 - Quadratic and Koszul algebras
% 17A30 - Algebras satisfying other identities
% 68W30 - Symbolic computation and algebraic computation

\keywords{Algebraic operads, Koszul duality, Gr\"obner bases, computer algebra.}

\begin{abstract}
We consider nonsymmetric operads with two binary operations satisfying relations in arity 3; 
hence these operads are quadratic, and so we can investigate Koszul duality.
We first consider operations which are nonassociative (not necessarily associative) 
and then specialize to the associative case.
We obtain a complete classification of self-dual quadratic nonsymmetric operads with two 
(associative or nonassociative) binary operations.
These operads generalize associativity for one operation to the setting of two operations.
\end{abstract}

\maketitle

%%%%%%%%%%%%%%%%%%%%%%%%%%%%%%%%%%%%%%%%%%%%%%%%%%%%%%%%%%%%%%%%%%%%%%%%%%%%%%%%%%%%%%%%%%%%%%%%%%%%

\section{Introduction}

\subsection{Associativity for one operation}

Let $\mathcal{O}$ be the free nonsymmetric operad generated by one binary operation $\bullet$ over
the field $\mathbb{F}$:
\[
\mathcal{O} = \bigoplus_{w \ge 0} \mathcal{O}(w),
\qquad
\dim_\mathbb{F} \mathcal{O}(w) = \frac{1}{w{+}1} \binom{2w}{w}.
\]
A basis of $\mathcal{O}(w)$ consists of all complete rooted binary trees with $w$ internal nodes,
and hence $w{+}1$ leaves.
(Note that we are indexing by weight, not arity.)
We interpret these trees as \emph{association types} (placements of parentheses) for the composition 
of $w{+}1$ arguments: the internal nodes represent the operation $\bullet$ and the leaves represent 
the arguments $x_1, \dots, x_{w+1}$.
Since $\mathcal{O}$ is a nonsymmetric operad, we do not need to specify the arguments:
the $i$-th argument is always $x_i$ for $1 \le i \le w{+}1$.
We call this set of trees the \emph{monomial basis} of $\mathcal{O}(w)$;
its size is the Catalan number.
Any (nonzero) element of $\mathcal{O}(w)$ is called a \emph{relation of weight} $w$.
To illustrate, we list the monomial bases for $0 \le w \le 3$, using dash to represent an argument:
\begin{align*}
w = 0\colon 
\quad 
&
-
\qquad
w = 1\colon 
\quad 
- \,\bullet\, -
\qquad
w = 2\colon 
\quad 
( - \,\bullet\, - ) \,\bullet\, -, \quad 
- \,\bullet\, ( - \,\bullet\, - )
\\
w = 3\colon 
\quad
&
( ( - \,\bullet\, - ) \,\bullet\, - ) \,\bullet\, -, \quad 
( - \,\bullet\, ( - \,\bullet\, - ) ) \,\bullet\, -, \quad
( - \,\bullet\, - ) \,\bullet\, ( - \,\bullet\, - ),
\\
&
{-} \,\bullet\, ( ( - \,\bullet\, - ) \,\bullet\, - ), \quad 
- \,\bullet\, ( - \,\bullet\, ( - \,\bullet\, - ) ).
\end{align*}
Given basis monomials $m_1$, $m_2$ of weights $w_1$, $w_2$ we define the \emph{composition}
$m_1 \circ_i m_2$ for $1 \le i \le w_1{+}1$ to be the result of substituting $m_2$ for the $i$-th
argument of $m_1$; in terms of trees, we are identifying the root of $m_2$ with the $i$-th leaf
of $m_1$.

A quadratic relation ($w = 2$) has the form $a m_1 + b m_2 \equiv 0$ with $a, b \in \mathbb{F}$ 
(not both 0) where $m_1 = ( - \,\bullet\, - ) \,\bullet\, -$,
$m_2 = - \,\bullet\, ( - \,\bullet\, - )$.
We define a symmetric bilinear form $\langle \, , \, \rangle$ on $\mathcal{O}(2)$ by
$\langle m_1, m_1 \rangle =  1$,
$\langle m_2, m_2 \rangle = -1$,
$\langle m_1, m_2 \rangle =  0$;
see Loday \cite[Proposition B.3]{Loday2001}.
With respect to this form, the orthogonal complement of the subspace spanned by $a m_1 + b m_2$ 
has basis $b m_1 + a m_2$.
The operad is (Koszul) self-dual (see \S\ref{koszulsubsection}) if and only if 
these two subspaces coincide; equivalently, the matrix 
$R = \begin{bmatrix} a & b \\ b & a \end{bmatrix}$
has rank 1.
This holds if and only if $\det R = a^2 - b^2 = 0$ (and $a, b$ are not both 0).
Up to scalar multiples, the only solutions are $(a,b) = (1,1)$ and $(a,b) = (1,-1)$, which define
the anti-associative operad $m_1 + m_2 \equiv 0$ and the associative operad $m_1 - m_2 \equiv 0$.

By a result of Osborn \cite[Corollary 2]{Osborn} we know that a homogeneous polynomial identity
is satisfied by a unital algebra if and only if the sum of its coefficients is 0.
This condition distinguishes associativity from anti-associativity: only the former
is satisfied by unital algebras.
We have therefore classified self-dual nonsymmetric operads with one binary operation,
and those which define unital algebras.

Our goal in this paper is to extend this classification to operads with two binary operations. 
This allows us to determine generalizations of associativity for these operads,
in the sense that the relations define a self-dual nonsymmetric operad, 
and in every relation the sum of coefficients is 0.
We use an approach based on computer algebra; our main tools are linear algebra over polynomial rings 
and Gr\"obner bases for polynomial ideals \cite[Chs.~7-10]{BD}.
Throughout, we assume that all vector spaces are over the field $\mathbb{F}$ which is algebraically closed 
of characteristic 0.

\subsection{Operads with two binary operations}

To motivate our study of structures with two binary operations, we recall the most important examples.
In his encyclopedia of algebras, Zinbiel (Loday) \cite{Zinbiel} mentions a number of
algebraic operads with two binary operations satisfying relations which are quadratic 
(each monomial has two operations and three arguments) and nonsymmetric 
(each monomial has the identity permutation of the arguments).
Some are (Koszul) self-dual, but most are not.
In most cases, both operations are associative; we denote them by $\opr$ and $\opl$.

\begin{definition} \label{2opdefinition}
\textbf{Two-associative} algebras satisfy only associativity:
  \[
  ( a \opr b ) \opr c \equiv a \opr ( b \opr c ), \qquad
  ( a \opl b ) \opl c \equiv a \opl ( b \opl c ).  
  \]
\textbf{Dual two-associative} algebras satisfy associativity and these relations:
  \[
  ( a \opr b ) \opl c \equiv 0, \qquad
  ( a \opl b ) \opr c \equiv 0, \qquad
  a \opr ( b \opl c ) \equiv 0, \qquad
  a \opl ( b \opr c ) \equiv 0.
  \]
\textbf{Duplicial} algebras satisfy associativity and inner associativity: 
  \[
  ( a \opr b ) \opl c \equiv a \opr ( b \opl c ).
  \]
\textbf{Dual duplicial} algebras satisfy associativity, inner associativity, and:
  \[
  ( a \opl b ) \opr c \equiv 0, \qquad
  a \opl ( b \opr c ) \equiv 0.
  \]
\textbf{Completely associative} algebras satisfy the following relations which include 
associativity and define a self-dual operad:
  \[
  ( a \ast b ) \ast' c \equiv a \ast ( b \ast' c ), \qquad \ast, \ast' \in \{ \opr, \opl \}.
  \]
\textbf{Two-compatible} algebras satisfy associativity and the relation which states that any linear 
combination of the operations is associative:
  \[
  ( a \opr b ) \opl c + ( a \opl b ) \opr c \equiv
  a \opr ( b \opl c ) + a \opl ( b \opr c ).
  \]
\textbf{Dual two-compatible} algebras satisfy the relations of completely associative and two-compatible algebras.
\textbf{Diassociative} algebras (or associative dialgebras) satisfy associativity, inner associativity, and the 
left and right bar relations:
  \[
  ( a \opr b ) \opr c \equiv ( a \opl b ) \opr c, \qquad
  a \opl ( b \opr c ) \equiv a \opl ( b \opl c ).
  \]
The dual operad defines \textbf{dendriform} algebras which have nonassociative operations satisfying
inner associativity and these relations:
\[
( a \opl b ) \opl c \equiv a \opl ( b \opl c ) + a \opl ( b \opr c ),
\qquad
a \opr ( b \opr c ) \equiv ( a \opr b ) \opr c + ( a \opl b ) \opr c.
\]
\end{definition}

%%%%%%%%%%%%%%%%%%%%%%%%%%%%%%%%%%%%%%%%%%%%%%%%%%%%%%%%%%%%%%%%%%%%%%%%%%%%%%%%%%%%%%%%%%%%%%%%%%%%

\section{Binary operations and Koszul duality}

\subsection{Binary operations}

We write $\calO$ for the free nonsymmetric operad generated by two (nonassociative) binary operations 
$\opr$ and $\opl$ which form a basis of the space $\calO(1)$ of all binary operations.
For $w \ge 0$, a basis of $\mathcal{O}(w)$ consists of all complete rooted binary trees with 
$w$ internal nodes each labelled by an operation.

\begin{lemma}
We have 
  \[
  \dim \calO(w) =  \frac{2^w}{w{+}1} \binom{2w}{w}
  \]
\end{lemma}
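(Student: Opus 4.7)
The plan is to reduce the count to the one-operation case discussed in the introduction and then multiply by the number of ways to assign operations to internal nodes. Specifically, a basis element of $\calO(w)$ is a pair consisting of (i) a complete rooted binary tree with $w$ internal nodes, and (ii) an assignment of a label in $\{\opr, \opl\}$ to each of those $w$ nodes. Since these two pieces of data are chosen independently, the dimension factors as a product.

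For the first factor, I would invoke the fact already recalled in the introduction: the number of complete rooted binary trees with $w$ internal nodes (equivalently, $w{+}1$ leaves) is the Catalan number $\frac{1}{w+1}\binom{2w}{w}$. For the second factor, each of the $w$ internal nodes is independently labelled by one of the two operations $\opr$ or $\opl$, giving $2^w$ choices. Multiplying yields
\[
\dim \calO(w) = 2^w \cdot \frac{1}{w{+}1}\binom{2w}{w} = \frac{2^w}{w{+}1}\binom{2w}{w},
\]
as claimed.

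If a generating-function justification is preferred, let $f(t) = \sum_{w \ge 0} \dim \calO(w)\, t^w$. The tree decomposition (root node, labelled by one of two operations, together with a left and right subtree) gives the functional equation $f(t) = 1 + 2t f(t)^2$, whose solution $f(t) = \frac{1 - \sqrt{1 - 8t}}{4t}$ has power series coefficients $\frac{2^w}{w+1}\binom{2w}{w}$ by the standard Catalan generating function with $t$ replaced by $2t$. There is no real obstacle here; the only point worth being careful about is that $\calO(0)$ (a single leaf, no internal node) contributes $1 = 2^0 C_0$, so the formula holds at $w = 0$ as well.
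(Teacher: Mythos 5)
Your argument is correct and matches the paper's proof, which likewise obtains the formula by multiplying the Catalan number of unlabelled binary trees (recalled in the introduction) by the factor $2^w$ for the independent choice of operation symbol at each internal node. The generating-function remark is a fine alternative but is not needed.
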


\begin{proof}
The factor $2^w$ represents the choices of operation symbols.
\end{proof}

\begin{example}
We have 
$\dim\,\calO(0) = 1$ with basis $\{ \, - \, \}$ (the argument symbol), and
$\dim\,\calO(1) = 2$ with ordered basis $\{ \, - \opr -, \, - \opl - \, \}$.
Every quadratic relation is an element of $\calO(2)$ which has dimension 8 and the ordered basis
in Table \ref{O2basis}.
\end{example}

\vspace{-4mm}

\begin{table}[ht]
\[
\begin{array}{l}
( - \opr - ) \opr -, \qquad
( - \opr - ) \opl -, \qquad
( - \opl - ) \opr -, \qquad
( - \opl - ) \opl -,
\\
- \opr ( - \opr - ), \qquad
- \opr ( - \opl - ), \qquad
- \opl ( - \opr - ), \qquad
- \opl ( - \opl - ).
\end{array}
\]
\caption{Ordered basis of quadratic space $\calO(2)$ for two binary operations}
\label{O2basis}
\end{table}

\vspace{-6mm}

\begin{definition}
A (nonzero) element $\rho \in \calO(2)$ is called a \textbf{quadratic relation}, and a subspace 
$R \subseteq \calO(2)$ is a \textbf{space of quadratic relations}.
The \textbf{operad ideal} $(R)$ generated by a subspace $R \subseteq \calO(w)$ is the smallest 
subspace of $\calO$ which contains $R$ and is closed under composition by arbitrary 
elements of $\calO$.
\end{definition}

The elements of a space $R$ of quadratic relations are satisfied by the quotient operad 
$\calQ = \calO/( R )$ where $( R )$ is the operad ideal generated by $R$.
If $\dim R = r$ then $R$ is the row space of a unique $r \times \dim\calO(2)$ matrix denoted $[R]$ which
has full rank and is in row canonical form (RCF); the columns are labelled by the ordered 
basis in Table \ref{O2basis}.
Conversely, the row space of any matrix with 8 columns can be regarded as a space of
quadratic relations.

\begin{definition}
The matrix $[R]$ is the \textbf{relation matrix} of the quadratic operad $\calQ = \calO/( R )$,  
and its rank $r$ is the \textbf{relation rank} of $\calQ$.
\end{definition}

\subsection{Koszul duality} \label{koszulsubsection}

Loday \cite[Proposition B.3]{Loday2001}, see also \cite[Chapter 7]{LV},
has shown that Koszul duality for
binary operations can be defined in elementary terms, using a nondegenerate inner product 
$\langle -, - \rangle$ on $\calO(2)$.
For $n$-ary operations, see \cite[\S2]{MR}.

\begin{definition}  \label{innerproduct}
For all $\bullet_1, \bullet_2 \in \{ \opr, \opl \}$ we define the symmetric bilinear form
$\langle -, - \rangle$ on basis monomials in Table \ref{O2basis} as follows: 
  \[
  \begin{array}{r@{\;}r@{\;}r@{}}
  \langle
  \; 
  ( - \, \bullet_1 \, - ) \, \bullet_2 \, -, 
  \; 
  - \, \bullet_1 \, ( - \, \bullet_2 \, - ) 
  \; 
  \rangle & = & 0,
  \\
  \langle
  \; 
  ( - \, \bullet_1 \, - ) \, \bullet_2 \, -, 
  \; 
  ( - \, \bullet_1 \, - ) \, \bullet_2 \, - 
  \; 
  \rangle & = & 1,
  \\
  \langle
  \; 
  - \, \bullet_1 \, ( - \, \bullet_2 \, - ), 
  \; 
  - \, \bullet_1 \, ( - \, \bullet_2 \, - ) 
  \; 
  \rangle & = & -1.
  \end{array}
  \]
\end{definition}

\begin{notation}
For any subspace $R \subseteq \calO(2)$ we write $R^\pm$ for its orthogonal complement with respect 
to the symmetric bilinear form $\langle -, - \rangle$ of Definition \ref{innerproduct}.
We write $R^\perp$ for its orthogonal complement with respect to the Euclidean inner product for 
which the monomials in Table \ref{O2basis} are an orthonormal basis.
\end{notation}

\begin{definition}
If $\calQ = \calO / ( R )$ then its \textbf{Koszul dual} is $\calQ^! = \calO / ( R^\pm )$.
We say that $\calQ$ is \textbf{self-dual} if $\calQ = \calQ^!$ (equivalently $R^\pm = R$).
\end{definition}

\begin{lemma}
We have $\dim\,R + \dim\,R^\pm = 8$.
If $\calQ = \calQ^!$ then $\dim\,R = 4$.
\end{lemma}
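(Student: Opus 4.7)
The plan is to reduce the statement to the standard linear algebra fact that, for a nondegenerate symmetric bilinear form on a finite-dimensional vector space $V$, any subspace $W \subseteq V$ satisfies $\dim W + \dim W^{\perp} = \dim V$. So the key preliminary step is to verify that the form $\langle -, - \rangle$ of Definition \ref{innerproduct} is nondegenerate on the 8-dimensional space $\calO(2)$.

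To do that I would write down the Gram matrix of $\langle -, - \rangle$ in the ordered basis of Table \ref{O2basis}. Parsing Definition \ref{innerproduct}: each of the four left-associated monomials $(- \bullet_1 -) \bullet_2 -$ pairs to $+1$ with itself; each of the four right-associated monomials $- \bullet_1 (- \bullet_2 -)$ pairs to $-1$ with itself; the cross pairings between a left- and a right-associated monomial with matching $(\bullet_1, \bullet_2)$ are $0$ by the first displayed formula; and all remaining pairings (between monomials whose operation labels differ in at least one position) are not listed, hence are $0$ by the convention that undeclared values of a bilinear form on a basis vanish. Thus the Gram matrix is $\mathrm{diag}(1,1,1,1,-1,-1,-1,-1)$, which has determinant $+1$, so the form is nondegenerate.

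With nondegeneracy in hand, the first assertion is immediate: the linear map $\calO(2) \to R^*$ induced by $\langle -, - \rangle$ is surjective (since a nondegenerate form restricts to a nondegenerate pairing between $\calO(2)$ and any subspace after quotienting by the radical) and its kernel is exactly $R^\pm$, giving $\dim R + \dim R^\pm = \dim \calO(2) = 8$. For the second assertion, if $\calQ = \calQ^!$ then by definition $R^\pm = R$, so $\dim R = \dim R^\pm$; substituting into the first equation yields $2 \dim R = 8$, hence $\dim R = 4$.

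There is no real obstacle here; the only point requiring a little care is reading Definition \ref{innerproduct} correctly to confirm that all pairings between basis monomials with mismatched operation labels vanish, so that the Gram matrix really is the diagonal $\pm 1$ matrix above and the form is nondegenerate. Once that is observed, the rest is a one-line consequence of the rank-nullity / orthogonal-complement dimension formula.
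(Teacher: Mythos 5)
Your proof is correct and follows the same route as the paper, whose entire proof is the one-liner ``$\langle -,-\rangle$ is nondegenerate and if $\calQ = \calQ^!$ then $\dim R = \dim R^\pm$''; you simply fill in the details (the diagonal Gram matrix $\mathrm{diag}(1,1,1,1,-1,-1,-1,-1)$ and the standard orthogonal-complement dimension formula) that the authors leave implicit.
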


\begin{proof}
$\langle -, - \rangle$ is nondegenerate and if $\calQ = \calQ^!$ then $\dim\,R = \dim\,R^\pm$.
\end{proof}

\begin{remark}
The relation matrices $[R]$ have entries in the polynomial ring $\Phi = \mathbb{F}[x_1,\dots,x_p]$,
so we regard operads as modules over $\Phi$, not vector spaces over $\mathbb{F}$.
\end{remark}

Loday has shown that computing $R^\pm$ can be reduced to computing the Euclidean orthogonal complement 
of a modified space; see Table \ref{lodayalg}.

\begin{table}[ht]
\begin{center}
\fbox{%
\parbox{12 cm}{%
\emph{Input}: The relation matrix $[R]$ for the quadratic operad $\calQ = \calO/(R)$ where 
$R \subseteq \calO(2)$ and the entries of $[R]$ belong to $\Phi$.

\smallskip

\emph{Output}: The relation matrix $[R^\pm]$ for the Koszul dual $\calQ^! = \calO/(R^\pm)$.

\smallskip

\emph{Algorithm}: 

\smallskip

(1)
Since the last 4 monomials in Table \ref{O2basis} have association type 2, 
we multiply columns 5--8 of $[R]$ by $-1$ to obtain the matrix $[R']$.

\smallskip

(2)
Since any leading 1 of $[R]$ in position $(i,j)$ for $j \ge 5$ becomes $-1$ in $[R']$,
we multiply any such rows of $[R']$ by $-1$ to obtain $[R'']$.

\smallskip

(3)
To find a basis for $R^\pm = (R'')^\perp$, we solve the linear system $[R''] X = 0$.
If $\dim R = r$ then there are $8{-}r$ free variables; we set them equal to the $8{-}r$ standard basis 
vectors in $\mathbb{F}^{8-r}$ and solve for the leading variables.  

\smallskip

(4)
Construct the $(8{-}r) \times 8$ matrix $[R^\pm]$ whose rows are the basis vectors for $(R'')^\perp$ computed 
in step (3).

\medskip \hrule \medskip

To find conditions for self-duality when $r = 4$ we do two more steps:  

\smallskip

(5)
Stack $[R]$ onto $[R^\pm]$, and use the leading 1s of the upper block $[R]$ to reduce the rows of 
the lower block $[R^\pm]$, obtaining the matrix $[T]$:
  \begin{center}
  $
  \left[ \begin{array}{c} R \\ R^\pm \end{array} \right]
  \xrightarrow{\; \text{$[R]$ reduces $[R^\pm]$ to $[T]$} \;}
  \left[ \begin{array}{c} R \\ T \end{array} \right]
  $
  \end{center}

(6)
The operad is self-dual if and only if $R = R^\pm$; that is, $[T] = 0$.
Since the entries of $[T]$ are elements of $\Phi$, we find
the zero set of the ideal generated by $[T]$; 
these values of the parameters $x_1, \dots, x_p$ define self-dual operads.
}
}
\end{center}
\smallskip
\caption{Loday's algorithm for the Koszul dual relation matrix}
\label{lodayalg}
\end{table}

%%%%%%%%%%%%%%%%%%%%%%%%%%%%%%%%%%%%%%%%%%%%%%%%%%%%%%%%%%%%%%%%%%%%%%%%%%%%%%%%%%%%%%%%%%%%%%%%%%%%

\section{Self-duality for two nonassociative operations}

By \emph{nonassociative} in this section we mean \emph{not necessarily associative}: we do not explicitly
assume associativity, but we will in \S\ref{associativesection}.

\subsection{Computational methods}

In a matrix in RCF, the entries above, below, and to the left 
of each leading 1 are 0, and the remaining entries are free parameters.
For an $r \times n$ matrix, there are $\binom{n}{r}$ choices of columns $j_1 < \cdots < j_r$ 
for leading 1s.
In particular, for $r = 4$ and $n = 8$ we have 70 cases. 

\begin{definition}
We define the \textbf{lex order} on subsets by 
$\{ j_1, \dots, j_r \} \prec \{ j'_1, \dots, j'_r \}$ if and only if $j_k < j'_k$ 
where $k$ is the least index for which $j_k \ne j'_k$.
\end{definition}

\begin{lemma}
The number of parameters as a function of $\{ j_1, \dots, j_r \}$ is:
  \[
  p = p( j_1, \dots, j_r ) = \sum_{i=1}^r \big[(n-j_i)-(r-i)\big].
  \]
\end{lemma}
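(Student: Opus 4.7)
The plan is to count free entries row by row, using the two defining properties of row canonical form: in row $i$, all entries to the left of the leading $1$ in column $j_i$ are zero, and every pivot column $j_k$ has zeros in all positions other than the pivot entry itself.

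First I would fix a row index $i$ and enumerate what happens in that row. The entries in columns $1, 2, \dots, j_i - 1$ are all zero (left of the leading $1$), and the entry in column $j_i$ is the leading $1$; none of these contribute parameters. This leaves the $n - j_i$ entries in columns $j_i + 1, \dots, n$ to account for. Among these later columns, the ones indexed by $j_{i+1}, j_{i+2}, \dots, j_r$ are pivot columns of subsequent rows; by the column condition on RCF, the entries of row $i$ in those positions must be zero. There are exactly $r - i$ such columns, since $j_i < j_{i+1} < \cdots < j_r$. Every remaining entry in row $i$ is unconstrained, contributing $(n - j_i) - (r - i)$ free parameters.

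Summing over $i = 1, \dots, r$ yields the claimed formula. To make the argument airtight I would briefly verify that $(n - j_i) - (r - i) \ge 0$ for each $i$, which follows from $j_i \le n - (r - i)$, a consequence of $j_i < j_{i+1} < \cdots < j_r \le n$.

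I do not expect any real obstacle: the statement is a direct combinatorial bookkeeping result about the shape of an RCF matrix, and the main care needed is simply to avoid double-counting the pivot columns themselves and to be precise about which later columns in row $i$ are forced to vanish.
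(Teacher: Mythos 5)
Your proof is correct and follows exactly the paper's (very terse) argument: count the $n-j_i$ entries to the right of each leading $1$ and subtract the $r-i$ of them lying in the pivot columns $j_{i+1},\dots,j_r$, which RCF forces to vanish. The only addition is your nonnegativity check, which is a harmless refinement.
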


\begin{proof}
Add the number of entries to the right of each leading 1, and subtract the number of entries which belong 
to the column of another leading 1.
\end{proof}

\begin{lemma} \label{lemma35notself-dual}
For the 35 subsets $\{ j_1, \dots, j_4 \}$ not containing 1 
the matrices $[R]$ do not define self-dual operads for any values of the parameters.
\end{lemma}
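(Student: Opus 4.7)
The plan is to show that column 1 of $[R]$ being identically zero forces a structural mismatch between $R$ and $R^\pm$ that persists for every choice of parameters. Since $1 \notin \{j_1,\dots,j_4\}$ means $j_1 \ge 2$, every leading 1 of $[R]$ lies in column $\ge 2$, and the RCF conditions then force column 1 of $[R]$ to be identically 0 (no leading 1 there, and no entries above a leading 1). This zero column is independent of the values of the parameters $x_1,\dots,x_p$.

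Next I would trace this zero column through Loday's algorithm (Table \ref{lodayalg}). Step (1) only rescales columns $5,\dots,8$, so column 1 of $[R']$ is still zero, and step (2) only rescales rows, so column 1 of $[R'']$ is zero. In step (3) we solve $[R'']X = 0$: because column 1 of $[R'']$ is zero, the variable $x_1$ does not appear in any equation, hence $x_1$ is a free variable. Setting $x_1 = 1$ and all other free variables to $0$, every leading variable must equal $0$, so the standard basis vector $e_1 = (1,0,0,0,0,0,0,0)$ is one of the rows produced for $[R^\pm]$.

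Finally I would apply steps (5)--(6): stack $[R]$ above $[R^\pm]$ and reduce the lower block using the leading 1s of the upper block. Since $[R]$ has no leading 1 in column 1, the row $e_1$ in $[R^\pm]$ cannot be reduced; its leading 1 in column 1 survives intact into $[T]$. Therefore $[T] \ne 0$, and by step (6) the operad $\calQ = \calO/(R)$ fails to be self-dual. Because every step of this argument is valid for arbitrary values of the parameters (the zero column 1 is forced by the RCF shape alone), the conclusion holds uniformly for all 35 subsets of size $4$ not containing $1$.

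The argument is essentially a single structural observation, so there is no real obstacle; the only thing to be careful about is justifying that $e_1$ is literally one of the basis vectors for $(R'')^\perp$ produced by the algorithm, which follows immediately from the fact that the column corresponding to $x_1$ is the zero column of $[R'']$.
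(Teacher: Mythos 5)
Your proposal is correct and follows exactly the paper's own argument: column~1 of $[R]$ is forced to be zero by the RCF shape, this propagates through $[R']$ and $[R'']$ so that $e_1$ appears as a row of $[R^\pm]$, and since $[R]$ has no leading 1 in column~1 that entry survives into $[T]$, making $[T]\ne 0$ (indeed its entries generate the unit ideal) for every choice of parameters. You simply spell out the intermediate steps that the paper compresses into three sentences.
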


\begin{proof}
Starting with $[R]$, we compute $[R']$, $[R'']$, $[R^\pm]$, $[T]$.
Since column 1 of $[R]$ is 0, column 1 of $[R^\pm]$ contains a leading 1.
This leading 1 remains unchanged when we compute $[T]$.
But if $[T]$ contains 1 then its entries generate the unit ideal $\Phi$.
\end{proof}

\begin{example}
For $\{ j_1, \dots, j_4 \} = \{2,3,4,5\}$ we obtain:
  \begin{alignat*}{2}
  [R] &=
  \left[ 
  \begin{array}{@{\,}r@{\;\;}r@{\;\;}r@{\;\;}r@{\;\;}r@{\;\;}r@{\;\;}r@{\;\;}r@{\,}}
  0 & 1 & 0 & 0 & 0 & A & B & C \\
  0 & 0 & 1 & 0 & 0 & D & E & F \\
  0 & 0 & 0 & 1 & 0 & G & H & I \\
  0 & 0 & 0 & 0 & 1 & J & K & L
  \end{array}
  \right]
  &\qquad
  [R'] &=
  \left[ 
  \begin{array}{@{\,}r@{\;\;}r@{\;\;}r@{\;\;}r@{\;\;}r@{\;\;}r@{\;\;}r@{\;\;}r@{\,}}
  0 & 1 & 0 & 0 & 0 & -A & -B & -C \\
  0 & 0 & 1 & 0 & 0 & -D & -E & -F \\
  0 & 0 & 0 & 1 & 0 & -G & -H & -I \\
  0 & 0 & 0 & 0 & -1 & -J & -K & -L
  \end{array}
  \right]
  \\
  [R''] &=
  \left[ 
  \begin{array}{@{\,}r@{\;\;}r@{\;\;}r@{\;\;}r@{\;\;}r@{\;\;}r@{\;\;}r@{\;\;}r@{\,}}
  0 & 1 & 0 & 0 & 0 & -A & -B & -C \\
  0 & 0 & 1 & 0 & 0 & -D & -E & -F \\
  0 & 0 & 0 & 1 & 0 & -G & -H & -I \\
  0 & 0 & 0 & 0 & 1 & J & K & L
  \end{array}
  \right]
  &\qquad
  [R^\pm] &=
  \left[ 
  \begin{array}{@{\,}r@{\;\;}r@{\;\;}r@{\;\;}r@{\;\;}r@{\;\;}r@{\;\;}r@{\;\;}r@{\,}}
  1 & 0 & 0 & 0 & 0 & 0 & 0 & 0 \\
  0 & A & D & G & -J & 1 & 0 & 0 \\
  0 & B & E & H & -K & 0 & 1 & 0 \\
  0 & C & F & I & -L & 0 & 0 & 1
  \end{array}
  \right]
  \end{alignat*}
The resulting matrix $[T]$ is
\[
\left[
\begin{array}{@{\,}c@{\,}c@{\,}c@{\,}c@{\,}c@{\;}ccc@{\,}}  
1 & 0 & 0 & 0 & 0 & 0 & 0 & 0 \\
0 & 0 & 0 & 0 & 0 & J^2{-}G^2{-}D^2{-}A^2{+}1 & JK{-}GH{-}DE{-}AB & JL{-}GI{-}DF{-}AC \\
0 & 0 & 0 & 0 & 0 & JK{-}GH{-}DE{-}AB & K^2{-}H^2{-}E^2{-}B^2{+}1 & KL{-}HI{-}EF{-}BC \\
0 & 0 & 0 & 0 & 0 & JL{-}GI{-}DF{-}AC & KL{-}HI{-}EF{-}BC & L^2{-}I^2{-}F^2{-}C^2{+}1
\end{array}
\right]
\]
Since $[T]$ contains 1 as an entry, no values of the parameters give $[T] = 0$. 
Note that the lower right $3 \times 3$ block is $I-M$ where $M = ( v_i \cdot v_j )$
for $(a_1,\dots,a_4) \cdot (b_1,\dots,b_4) = a_1 b_1 + a_2 b_2 + a_3 b_3 - a_4 b_4$
and $v_1, v_2, v_3$ are the last 3 columns of $[R]$.
\end{example}

\subsection{Cases 1 to 35}

We now consider the subsets $\{ j_1, \dots, j_4 \}$ for which $j_1 = 1$. 

\begin{lemma} \label{lemmacase5}
Of the 35 subsets $\{ 1, j_2, j_3, j_4 \}$ there are 21 for which the ideal generated by 
the entries of $[T]$ equals $\Phi$: in lex order, cases 5, 9, 12--15, 19, 22--35.
For these cases, $[R]$ does not define a self-dual operad for any values of the parameters.
\end{lemma}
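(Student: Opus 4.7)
The plan is to follow Loday's algorithm (Table \ref{lodayalg}) for each of the 35 subsets $\{1, j_2, j_3, j_4\}$ and to show that for the 21 listed cases the resulting matrix $[T]$ generates the unit ideal of the relevant polynomial ring $\Phi = \mathbb{F}[x_1, \dots, x_p]$.

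For each such subset, I first write down the generic relation matrix $[R]$ in RCF with leading $1$s in columns $1, j_2, j_3, j_4$ and fresh parameters in every entry that is not forced to $0$ by the RCF constraints; the number of parameters is given by the preceding lemma. Steps (1)--(4) of Loday's algorithm then produce $[R']$, $[R'']$, and $[R^\pm]$ by column sign-flips, row rescalings, and a linear-system solve whose entries lie in $\Phi$. Stacking $[R]$ on top of $[R^\pm]$ and row-reducing the lower block against the leading $1$s of the upper block as in step (5) gives $[T]$, whose entries generate the ideal $J \subseteq \Phi$ that must be tested. Since all of this is polynomial matrix arithmetic, I would delegate the computation to a computer algebra system.

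The decisive step is then, for each case, to decide whether $1 \in J$. This is handled by computing a reduced Gr\"obner basis of $J$ with respect to any monomial order and checking whether it equals $\{1\}$. I expect that the 21 cases listed (namely 5, 9, 12--15, 19, 22--35) are exactly those for which this test succeeds. In many of these cases I expect, as in the example immediately preceding the lemma, that $[T]$ contains an explicit nonzero constant as an entry, so that $J = \Phi$ is visible with no Gr\"obner basis needed; in the remainder the witness $1$ should emerge as a polynomial combination of entries that the Gr\"obner basis algorithm will produce.

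The main obstacle is not conceptual but organizational: there are 21 distinct cases, each with its own parameter set and its own matrix $[T]$, so that the proof reduces to a case analysis supported by computer algebra. Some uniformity can be exploited, since $[T]$ typically contains a block of the form $I - M$ where $M$ is the Gram matrix (with respect to the indefinite form of Definition \ref{innerproduct}) of certain columns of $[R]$, as in the displayed example. The remaining 14 subsets containing $1$ will be addressed in the subsequent analysis, where $J$ turns out to have a nontrivial zero set and so genuinely yields self-dual operads.
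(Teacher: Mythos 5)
Your proposal is correct and follows the same computational framework the paper uses: set up the generic RCF matrix $[R]$ for each of the 35 subsets containing $1$, run Loday's algorithm to get $[T]$, and decide whether $1$ lies in the ideal $J$ generated by its entries. The paper in fact offers both of the routes you mention, but in the opposite order of emphasis: its displayed proof treats case 5 with a uniform linear-algebra observation --- the vanishing of $[T]$ would force the existence of four pairwise orthogonal vectors $v_1,\dots,v_4$ in $\mathbb{F}^3$ with $v_i\cdot v_i\ne 0$, which is impossible since such vectors are automatically linearly independent --- and asserts the other 20 cases are similar; the Gr\"obner basis computation terminating in $\{1\}$ is then given only as a supplementary remark. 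Your version leans entirely on the Gr\"obner test, which is a valid decision procedure and is exactly what the paper's remark carries out, so the argument goes through; the paper's geometric phrasing buys a human-checkable, case-uniform reason for failure, while yours buys full automation. One small correction: your expectation that ``$[T]$ contains an explicit nonzero constant as an entry'' in many of these cases is borrowed from the wrong example --- that phenomenon occurs for the 35 subsets \emph{not} containing $1$ (Lemma \ref{lemma35notself-dual}), where column 1 of $[R^\pm]$ acquires a leading $1$ that survives reduction. For the 21 cases of the present lemma the entries of $[T]$ are genuine non-constant polynomials (e.g.\ $I^2+E^2+A^2+1$ in case 5, which \emph{does} have zeros over an algebraically closed field), so the unit ideal is only visible from the full system, via either the orthogonality argument or the Gr\"obner basis; your fallback to the latter is what actually carries the proof.
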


\begin{proof}
We give details for case 5; the computations in the other cases are similar.
With leading 1s in columns 1, 2, 3, 8 we obtain:
  \[
  \left[ \begin{array}{c} R \\ R^\pm \end{array} \right]
  =
  \left[ 
  \begin{array}{rrrrrrrr}
  1 & 0 & 0 & A & B & C & D & 0 \\
  0 & 1 & 0 & E & F & G & H & 0 \\
  0 & 0 & 1 & I & J & K & L & 0 \\
  0 & 0 & 0 & 0 & 0 & 0 & 0 & 1 \\ \midrule
  -A & -E & -I & 1 & 0 & 0 & 0 & 0 \\
   B &  F &  J & 0 & 1 & 0 & 0 & 0 \\
   C &  G &  K & 0 & 0 & 1 & 0 & 0 \\
   D &  H &  L & 0 & 0 & 0 & 1 & 0
  \end{array}
  \right]
  \]
Reducing $[R^\pm]$ using $[R]$ produces the matrix $[T]$ whose nonzero columns are:
  \[
  \left[
  \begin{array}{@{\,}cccc@{\,}}
  I^2{+}E^2{+}A^2{+}1 & IJ{+}EF{+}AB & IK{+}EG{+}AC & IL{+}EH{+}AD \\
  {-}IJ{-}EF{-}AB & {-}J^2{-}F^2{-}B^2{+}1 & {-}JK{-}FG{-}BC & {-}JL{-}FH{-}BD \\
  {-}IK{-}EG{-}AC & {-}JK{-}FG{-}BC & {-}K^2{-}G^2{-}C^2{+}1 & {-}KL{-}GH{-}CD \\
  {-}IL{-}EH{-}AD & {-}JL{-}FH{-}BD & {-}KL{-}GH{-}CD & {-}L^2{-}H^2{-}D^2{+}1
  \end{array}
  \right]
  \]
$[T] = 0$ if and only if there exist 4 vectors in $\mathbb{F}^3$, namely the columns
of the $3 \times 4$ block of $[R]$ containing the parameters,
satisfying these equations with respect to the Euclidean inner product:
$v_1 \cdot v_1 = -1$,
$v_2 \cdot v_2 = v_3 \cdot v_3 = v_4 \cdot v_4 = 1$,
$v_i \cdot v_j = 0$ ($i \ne j$).
This says that there exist 4 orthogonal nonzero vectors in $\mathbb{F}^3$; contradiction.
\end{proof}

We can also prove Lemma \ref{lemmacase5} using computer algebra; see Table \ref{grobneralg}.

\begin{table}[ht]
\begin{center}
\fbox{%
\parbox{12 cm}{%
\emph{Input:}
A monomial order $\prec$ on $\Phi = \mathbb{F}[ x_1, \dots, x_p ]$, together with a subset 
$G = \{ f_1 \prec \cdots \prec f_n \} \subset \Phi$ generating the ideal $I \subseteq \Phi$.

\emph{Output:}
The Gr\"obner basis of $I$ with respect to $\prec$.

\emph{Algorithm:}

(1) Set $G_0 \leftarrow \emptyset$ and $G_1 \leftarrow G$.

(2) Set $k \leftarrow 1$.

(3) While $G_{k-1} \ne G_k$ do:

\begin{enumerate}
\item[(a)]
Self-reduce $G_k$:
For each $f_i \in G_k$ do:
\begin{itemize}
\item 
Compute the normal form $N(f_i)$ with respect to the previous elements $f_1 \prec \cdots \prec f_{i-1}$.
\item
If $N(f_i) = 0$ then remove $f_i$ from $G_k$, otherwise replace $f_i$ by $\textsf{monicform}(N(f_i))$.
\item
Sort $G_k$ with respect to $\prec$.
\end{itemize}
\item[(b)]
Compute the set of S-polynomials:
\begin{itemize}
\item
Set $H \leftarrow \emptyset$.
\item
For all $1 \le i < j \le |G_k|$ compute $h_{ij} = S(f_i,f_j)$ and its normal form $N(h_{ij})$
with respect to $G_k$; 
if $N(h_{ij}) \ne 0$ then set $H \leftarrow H \cup \{ \textsf{monicform}(N(h_{ij})) \}$. 
\end{itemize}
\item[(c)]
Set $G_{k+1} \leftarrow G_k \cup H$.
Sort $G_{k+1}$ with respect to $\prec$.
\item[(d)]
Set $k \leftarrow k+1$.
\end{enumerate}
}
}
\end{center}
\caption{Algorithm to compute a Gr\"obner basis for a polynomial ideal}
\label{grobneralg}
\end{table}

\begin{remark}
We apply the algorithm of Table \ref{grobneralg} to the ideal generated by the entries of
the matrix $[T]$ from the proof of Lemma \ref{lemmacase5}:
\begin{enumerate}
\item[$k=1$:]
The original set $G_1$ of 10 generators is already self-reduced; it produces 24 S-polynomials
with $N(h) \ne 0$.
\item[$k=2$:]
The set $G_2$ has 34 elements but self-reduction eliminates 4.
The remaining 30 generators produce 232 S-polynomials with $N(h) \ne 0$.
\item[$k=3$:]
The set $G_3$ has 262 elements but self-reduction eliminates 114.
The remaining 148 generators produce 6916 S-polynomials $h$ with $N(h) \ne 0$.
\item[$k=4$:]
The set $G_4$ has 7064 elements but self-reduction eliminates 6620.
The remaining 444 generators produce 92 S-polynomials $h$ with $N(h) \ne 0$.
\item[$k=5$:]
The set $G_5$ has 536 elements but self-reduction eliminates 523.
The remaining 13 generators are the 12 parameters $A, \dots, L$ together with 1.
\item[$k=6$:]
The set $G_6$ has 13 elements but self-reduction eliminates 12 and leaves $\{ 1 \}$.
The algorithm terminates with the Gr\"obner basis $\{ 1 \}$.
\end{enumerate}
We did these calculations in Maple with the graded reverse lex order ($A \prec \cdots \prec L$).
\end{remark}

\begin{definition} \label{SPdefinition}
Let $\mathcal{S}$ be the set of 14 cases corresponding to subsets $\{ 1, j_2, j_3, j_4 \}$ 
for which the entries of $[T]$ generate a proper ideal in $\Phi$:
\[
\mathcal{S} = 
\{ \, 1, \; 2, \; 3, \; 4, \; 6, \; 7, \; 8, \; 10, \; 11, \; 16, \; 17, \; 18, \; 20, \; 21 \, \}.
\]
By Lemma \ref{lemmacase5}, these cases have self-dual operads defined by
the parameter values in the zero set of the ideal generated by $[T]$.
The corresponding relation matrices $[R]$ are displayed in Table \ref{14nontrivial}.
We write $P$ for the $4 \times 4$ \textbf{parameter matrix} 
obtained by deleting columns $1, j_2, j_3, j_4$ from $[R]$.
In case 1, $P = [ W, X, Y, Z ]$ where $W = (W_1,W_2,W_3,W_4)^t$, etc.
In the other cases, we obtain $P$ by setting some entries to 0 in $[ W, X, Y, Z ]$;
these entries are a subset of $\{ W_2, W_3, W_4, X_3, X_4, Y_4 \}$.
\end{definition}  

  \begin{table}[ht]
  \[
  \begin{array}{r@{\;\;}r@{\;\;}r}
  1\!\!
  \left[ \begin{array}{@{\,}c@{\;}c@{\;}c@{\;}c@{\;}c@{\;}c@{\;}c@{\;}c@{\,}} 
  1 & 0 & 0 & 0 & W_1 & X_1 & Y_1 & Z_1 \\ 
  0 & 1 & 0 & 0 & W_2 & X_2 & Y_2 & Z_2 \\ 
  0 & 0 & 1 & 0 & W_3 & X_3 & Y_3 & Z_3 \\ 
  0 & 0 & 0 & 1 & W_4 & X_4 & Y_4 & Z_4
  \end{array} \right] 
  & 
  2\!\!
  \left[ \begin{array}{@{\,}c@{\;}c@{\;}c@{\;}c@{\;}c@{\;}c@{\;}c@{\;}c@{\,}} 
  1 & 0 & 0 & W_1 & 0 & X_1 & Y_1 & Z_1 \\ 
  0 & 1 & 0 & W_2 & 0 & X_2 & Y_2 & Z_2 \\ 
  0 & 0 & 1 & W_3 & 0 & X_3 & Y_3 & Z_3 \\ 
  0 & 0 & 0 & 0 & 1 & X_4 & Y_4 & Z_4
  \end{array} \right] 
  & 
  3\!\!
  \left[ \begin{array}{@{\,}c@{\;}c@{\;}c@{\;}c@{\;}c@{\;}c@{\;}c@{\;}c@{\,}} 
  1 & 0 & 0 & W_1 & X_1 & 0 & Y_1 & Z_1 \\ 
  0 & 1 & 0 & W_2 & X_2 & 0 & Y_2 & Z_2 \\ 
  0 & 0 & 1 & W_3 & X_3 & 0 & Y_3 & Z_3 \\ 
  0 & 0 & 0 & 0 & 0 & 1 & Y_4 & Z_4
  \end{array} \right] 
  \\[8mm]
  4\!\!
  \left[ \begin{array}{@{\,}c@{\;}c@{\;}c@{\;}c@{\;}c@{\;}c@{\;}c@{\;}c@{\,}} 
  1 & 0 & 0 & W_1 & X_1 & Y_1 & 0 & Z_1 \\ 
  0 & 1 & 0 & W_2 & X_2 & Y_2 & 0 & Z_2 \\ 
  0 & 0 & 1 & W_3 & X_3 & Y_3 & 0 & Z_3 \\ 
  0 & 0 & 0 & 0 & 0 & 0 & 1 & Z_4
  \end{array} \right] 
  & 
  6\!\!
  \left[ \begin{array}{@{\,}c@{\;}c@{\;}c@{\;}c@{\;}c@{\;}c@{\;}c@{\;}c@{\,}} 
  1 & 0 & W_1 & 0 & 0 & X_1 & Y_1 & Z_1 \\ 
  0 & 1 & W_2 & 0 & 0 & X_2 & Y_2 & Z_2 \\ 
  0 & 0 & 0 & 1 & 0 & X_3 & Y_3 & Z_3 \\ 
  0 & 0 & 0 & 0 & 1 & X_4 & Y_4 & Z_4
  \end{array} \right] 
  & 
  7\!\!
  \left[ \begin{array}{@{\,}c@{\;}c@{\;}c@{\;}c@{\;}c@{\;}c@{\;}c@{\;}c@{\,}} 
  1 & 0 & W_1 & 0 & X_1 & 0 & Y_1 & Z_1 \\ 
  0 & 1 & W_2 & 0 & X_2 & 0 & Y_2 & Z_2 \\ 
  0 & 0 & 0 & 1 & X_3 & 0 & Y_3 & Z_3 \\ 
  0 & 0 & 0 & 0 & 0 & 1 & Y_4 & Z_4
  \end{array} \right] 
  \\[8mm]
  8\!\!
  \left[ \begin{array}{@{\,}c@{\;}c@{\;}c@{\;}c@{\;}c@{\;}c@{\;}c@{\;}c@{\,}} 
  1 & 0 & W_1 & 0 & X_1 & Y_1 & 0 & Z_1 \\ 
  0 & 1 & W_2 & 0 & X_2 & Y_2 & 0 & Z_2 \\ 
  0 & 0 & 0 & 1 & X_3 & Y_3 & 0 & Z_3 \\ 
  0 & 0 & 0 & 0 & 0 & 0 & 1 & Z_4
  \end{array} \right] 
  & 
  10\!\!
  \left[ \begin{array}{@{\,}c@{\;}c@{\;}c@{\;}c@{\;}c@{\;}c@{\;}c@{\;}c@{\,}} 
  1 & 0 & W_1 & X_1 & 0 & 0 & Y_1 & Z_1 \\ 
  0 & 1 & W_2 & X_2 & 0 & 0 & Y_2 & Z_2 \\ 
  0 & 0 & 0 & 0 & 1 & 0 & Y_3 & Z_3 \\ 
  0 & 0 & 0 & 0 & 0 & 1 & Y_4 & Z_4
  \end{array} \right] 
  &
  11\!\!
  \left[ \begin{array}{@{\,}c@{\;}c@{\;}c@{\;}c@{\;}c@{\;}c@{\;}c@{\;}c@{\,}} 
  1 & 0 & W_1 & X_1 & 0 & Y_1 & 0 & Z_1 \\ 
  0 & 1 & W_2 & X_2 & 0 & Y_2 & 0 & Z_2 \\ 
  0 & 0 & 0 & 0 & 1 & Y_3 & 0 & Z_3 \\ 
  0 & 0 & 0 & 0 & 0 & 0 & 1 & Z_4
  \end{array} \right] 
  \\[8mm]
  16\!\!
  \left[ \begin{array}{@{\,}c@{\;}c@{\;}c@{\;}c@{\;}c@{\;}c@{\;}c@{\;}c@{\,}} 
  1 & W_1 & 0 & 0 & 0 & X_1 & Y_1 & Z_1 \\ 
  0 & 0 & 1 & 0 & 0 & X_2 & Y_2 & Z_2 \\ 
  0 & 0 & 0 & 1 & 0 & X_3 & Y_3 & Z_3 \\ 
  0 & 0 & 0 & 0 & 1 & X_4 & Y_4 & Z_4
  \end{array} \right] 
  & 
  17\!\!
  \left[ \begin{array}{@{\,}c@{\;}c@{\;}c@{\;}c@{\;}c@{\;}c@{\;}c@{\;}c@{\,}} 
  1 & W_1 & 0 & 0 & X_1 & 0 & Y_1 & Z_1 \\ 
  0 & 0 & 1 & 0 & X_2 & 0 & Y_2 & Z_2 \\ 
  0 & 0 & 0 & 1 & X_3 & 0 & Y_3 & Z_3 \\ 
  0 & 0 & 0 & 0 & 0 & 1 & Y_4 & Z_4
  \end{array} \right] 
  & 
  18\!\!
  \left[ \begin{array}{@{\,}c@{\;}c@{\;}c@{\;}c@{\;}c@{\;}c@{\;}c@{\;}c@{\,}} 
  1 & W_1 & 0 & 0 & X_1 & Y_1 & 0 & Z_1 \\ 
  0 & 0 & 1 & 0 & X_2 & Y_2 & 0 & Z_2 \\ 
  0 & 0 & 0 & 1 & X_3 & Y_3 & 0 & Z_3 \\ 
  0 & 0 & 0 & 0 & 0 & 0 & 1 & Z_4
  \end{array} \right] 
  \\[8mm]
  20\!\!
  \left[ \begin{array}{@{\,}c@{\;}c@{\;}c@{\;}c@{\;}c@{\;}c@{\;}c@{\;}c@{\,}} 
  1 & W_1 & 0 & X_1 & 0 & 0 & Y_1 & Z_1 \\ 
  0 & 0 & 1 & X_2 & 0 & 0 & Y_2 & Z_2 \\ 
  0 & 0 & 0 & 0 & 1 & 0 & Y_3 & Z_3 \\ 
  0 & 0 & 0 & 0 & 0 & 1 & Y_4 & Z_4
  \end{array} \right] 
  & 
  21\!\!
  \left[ \begin{array}{@{\,}c@{\;}c@{\;}c@{\;}c@{\;}c@{\;}c@{\;}c@{\;}c@{\,}} 
  1 & W_1 & 0 & X_1 & 0 & Y_1 & 0 & Z_1 \\ 
  0 & 0 & 1 & X_2 & 0 & Y_2 & 0 & Z_2 \\ 
  0 & 0 & 0 & 0 & 1 & Y_3 & 0 & Z_3 \\ 
  0 & 0 & 0 & 0 & 0 & 0 & 1 & Z_4
  \end{array} \right] 
  \end{array}
  \]
  \caption{The 14 relation matrices $[R]$ defining self-dual operads}
  \label{14nontrivial}
  \end{table}
  
\begin{remark}
As the parameters range over $\mathbb{F}$, each matrix in Table \ref{14nontrivial} defines
a Schubert cell in the Grassmannian $G_\mathbb{F}(4,8)$ of 4-dimensional subspaces of $\mathbb{F}^8$.
The nonzero entries of $P$ (rotated $90^\circ$ counter-clockwise)
form a Young diagram for a partition of the number of parameters.
For example, case 1 gives $16 = 4{+}4{+}4{+}4$, and case 21 gives $10 = 4{+}3{+}2{+}1$.
For more information, see Fulton \cite{Fulton}, Hiller \cite{Hiller}.
\end{remark}
    
\begin{lemma} \label{sylvesterlemma}
Assume that $\mathbb{F} = \mathbb{R}$.
Let $D$ and $E$ be $n \times n$ diagonal matrices with nonzero entries $\pm 1$.
Every solution of $A^t E A = D$ has the form
$A = \overline{\sqrt{D}} C \sqrt{E}$
for some orthogonal matrix $C$ where bar denotes complex conjugate.
\end{lemma}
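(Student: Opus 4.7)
The plan is to prove the lemma by direct verification: I would show that the ansatz $A = \overline{\sqrt{D}}\,C\,\sqrt{E}$ furnishes a one-to-one parametrization of the solution set of $A^t E A = D$ by the complex orthogonal group $O(n,\mathbb{C})$, via a change-of-variables conjugation by the diagonal factors $\sqrt{D}$ and $\sqrt{E}$.

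First I would record the structural facts that drive the calculation. Because $D$ and $E$ are diagonal involutions with entries $\pm 1$, their principal square roots are diagonal with entries in $\{1,i\}$. In particular they are symmetric (so $\sqrt{D}^t = \sqrt{D}$ and $\overline{\sqrt{D}}^t = \overline{\sqrt{D}}$) and unitary (so $\sqrt{D}\,\overline{\sqrt{D}} = I$, equivalently $\overline{\sqrt{D}} = \sqrt{D}^{-1}$); moreover $\sqrt{D}^2 = D$, $\overline{\sqrt{D}}^2 = \overline{D} = D$, and $D^2 = I$, with the analogous identities for $E$. Finally, any two diagonal matrices commute, which I would use silently to slide $\sqrt{D}, \sqrt{E}, D, E$ past each other inside products.

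The heart of the proof is then mechanical. Given a solution $A$, invert the ansatz to define the candidate $C := \sqrt{D}\,A\,\overline{\sqrt{E}}$ (using $\overline{\sqrt{D}}^{-1} = \sqrt{D}$ and $\sqrt{E}^{-1} = \overline{\sqrt{E}}$), compute $C^t C$, and substitute the hypothesis $A^t E A = D$ into the middle. The surrounding diagonal factors collapse via the identities above to leave $C^t C = I$, so $C$ is orthogonal. Conversely, for an arbitrary orthogonal $C$, substituting $A = \overline{\sqrt{D}}\,C\,\sqrt{E}$ into $A^t E A$ runs the same calculation backwards and returns $D$, so every orthogonal $C$ produces a solution. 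These two computations together establish the bijection.

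I do not expect a serious obstacle: once the square-root identities and commutation of diagonal matrices are laid out, both directions reduce to a one-line manipulation. The only real friction is bookkeeping — carefully tracking transposes versus complex conjugates, and checking that the specific placement of $\sqrt{D}$ and $\sqrt{E}$ in the ansatz matches the form $A^t E A$ (rather than the variant $A E A^t$, which would exchange their roles). This is essentially the complex analogue of Sylvester's inertia normalization, and the argument is in the same spirit as a polar decomposition.
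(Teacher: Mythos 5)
Your overall strategy---inverting the ansatz to $C=\sqrt{D}\,A\,\overline{\sqrt{E}}$ and checking $C^tC=I$ by a direct computation with diagonal square roots---is sound, and it is more economical than the paper's argument, which detours through Sylvester's Law of Inertia and a symmetric square root $\sqrt{S}=C\sqrt{E}\,C^t$ of an auxiliary real symmetric matrix $S$ before reducing to $B^tB=E$ via $B=\sqrt{D}A$. But the one step you wave at does not go through as described. With your $C$,
\[
C^tC=\overline{\sqrt{E}}\,A^t\sqrt{D}\cdot\sqrt{D}\,A\,\overline{\sqrt{E}}
=\overline{\sqrt{E}}\,\bigl(A^tDA\bigr)\,\overline{\sqrt{E}},
\]
so the matrix sitting in the middle is $A^tDA$, not $A^tEA$; the hypothesis $A^tEA=D$ cannot be ``substituted into the middle,'' and it does not formally yield $A^tDA=E$ (it yields $ADA^t=E$, which is different). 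This is exactly the ``placement of $\sqrt{D}$ and $\sqrt{E}$'' issue you flagged as mere bookkeeping and then did not check.

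What your calculation actually exposes is that the lemma as printed is internally inconsistent: the displayed formula parametrizes the solutions of $A^tDA=E$, not of $A^tEA=D$. For instance, with $n=2$, $D=I_2$, $E=\mathrm{diag}(1,-1)$, and $C$ the transposition matrix, $A=\overline{\sqrt{D}}\,C\sqrt{E}=\left(\begin{smallmatrix}0&i\\1&0\end{smallmatrix}\right)$ satisfies $A^tDA=E$ but $A^tEA=-I_2\ne D$. The paper's own proof in fact concludes that ``$A=(\sqrt{D})^{-1}B$ satisfies $A^tDA=E$,'' and Theorem \ref{theorem14nonassociative} invokes the lemma for the equation $P^tDP=E$, so the equation in the statement should read $A^tDA=E$ (equivalently, swap $D$ and $E$ in the formula). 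Once that is corrected, your computation closes in one line in each direction: $C^tC=\overline{\sqrt{E}}\,E\,\overline{\sqrt{E}}=I$, and conversely $A^tDA=\sqrt{E}\,C^t\,\overline{\sqrt{D}}D\overline{\sqrt{D}}\,C\sqrt{E}=\sqrt{E}\,C^tC\,\sqrt{E}=E$. So: right method, cleaner than the paper's, but as a proof of the literal statement it fails at the substitution step; you should have detected and repaired the $D\leftrightarrow E$ mismatch rather than deferring it to ``bookkeeping.''
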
  
  
\begin{proof}
If $D \ne I$ or $E \ne I$ then we extend scalars to $\mathbb{C}$ so that 
we can form $\sqrt{D}$ and $\sqrt{E}$.
Let $S$ be any real symmetric matrix with the same signature as $E$.
By Sylvester's Law of Inertia there is an orthogonal matrix $C$ for which $C^t S C = E$.
Thus $S$ has the square root $\sqrt{S} = C \sqrt{E} C^t$ and hence $\sqrt{S}^t = \sqrt{S}$.
It follows that $B = \sqrt{S} C = C \sqrt{E}$ is a solution of $B^t B = E$ 
and every solution can be obtained this way for some orthogonal matrix $C$.
Clearly, $B$ satisfies $B^t B = E$ if and only if $A = (\sqrt{D})^{-1} B$ satisfies $A^t D A = E$.
Finally, note that $(\sqrt{D})^{-1} = \overline{\sqrt{D}}$.
\end{proof}  

\begin{remark} \label{DEremark}
If $A = ( a_{ij} )$ is a solution of $A^t E A = D$ then the columns $U_1, \dots, U_n$ of $A$ 
are basis of $\mathbb{R}^n$ such that 
$\langle U_i, U_i \rangle = d_{ii}$ and
$\langle U_i, U_j \rangle = 0$ ($i \ne j$)
where the diagonal entries of $E$ are the signature:
$\langle ( v_1, \dots, v_n ), ( w_1, \dots, w_n ) \rangle = \sum_{i=1}^n e_{ii} v_i w_i$.
\end{remark}

\begin{theorem} \label{theorem14nonassociative}
Assume that $\mathbb{F} = \mathbb{R}$.
For every case in $\mathcal{S}$, the parameter values defining self-dual operads
are the solutions of the equation $P^t D P = E$, where $P$ is the parameter matrix
and the diagonal matrices $D, E$ are as follows:

$\bullet$
Case 1: $D =  E = I_4$.

$\bullet$
Cases 2, 3, 4, 6, 8, 16, 17, 18: $D = \mathrm{diag}(1,1,1,{-}1)$, $E = \mathrm{diag}({-}1,1,1,1)$.

$\bullet$
Cases 10, 11, 20, 21: $D = \mathrm{diag}(1,1,{-}1,{-}1)$, $E = \mathrm{diag}({-}1,{-}1,1,1)$.

\noindent
Thus $P = \overline{\sqrt{D}} C \sqrt{E}$ where $C$ is orthogonal and has zeros in the same
entries as $P$.
\end{theorem}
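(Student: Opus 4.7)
The plan is to run Loday's algorithm (Table \ref{lodayalg}) uniformly on the 14 parametric relation matrices in Table \ref{14nontrivial}. For each case let $c_1 = 1 < c_2 < c_3 < c_4$ be the leading-1 columns of $[R]$ and $p_1 < p_2 < p_3 < p_4$ the parameter columns, so that row $k$ of $[R]$ has a $1$ in column $c_k$ and entry $P_{k,l}$ in column $p_l$ (with some $P_{k,l}$ forced to $0$). Define signs $\epsilon_k = +1$ if $c_k \le 4$ and $\epsilon_k = -1$ if $c_k \ge 5$, and similarly $\delta_l$ from $p_l$.

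First I would chase through steps (1) and (2) of Loday's algorithm: step (1) multiplies the entry in row $k$, column $p_l$ by $\delta_l$ (a $-1$ only when $p_l \ge 5$), and step (2) scales row $k$ by $\epsilon_k$ (a $-1$ only when $c_k \ge 5$). The net effect is that $[R'']$ is still in reduced row echelon form, with its $(k,l)$-parameter entry now equal to $\epsilon_k \delta_l P_{k,l}$. Following steps (3)--(4) yields $[R^\pm]$ whose $l$-th row has a $1$ in column $p_l$, zeros in the other parameter columns, and $-\epsilon_k \delta_l P_{k,l}$ in column $c_k$. Stacking $[R^\pm]$ below $[R]$ and reducing by the leading $1$s of $[R]$, a direct calculation shows that $[T]$ vanishes outside the four parameter columns, where it equals
\begin{equation*}
[T]_{\mathrm{par}} \;=\; I_4 + D_\delta\, P^t E_\epsilon\, P,
\qquad
D_\delta = \mathrm{diag}(\delta_1,\dots,\delta_4),
\quad
E_\epsilon = \mathrm{diag}(\epsilon_1,\dots,\epsilon_4).
\end{equation*}
Since $D_\delta^2 = I$, the condition $[T] = 0$ is equivalent to $P^t E_\epsilon P = -D_\delta$, i.e., $P^t D P = E$ with $D = E_\epsilon$ and $E = -D_\delta$.

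It then remains to read off $(c_k)$ and $(p_l)$ from Table \ref{14nontrivial} in each case. In case 1 all $c_k \le 4$ and all $p_l \ge 5$, giving $D = E = I_4$. In cases 2, 3, 4, 6, 8, 16, 17, 18, exactly one $c_k$ lies in $\{5,\dots,8\}$ (forcing $c_4 \ge 5$) and exactly one $p_l$ lies in $\{1,\dots,4\}$ (forcing $p_1 \le 4$), producing $D = \mathrm{diag}(1,1,1,-1)$ and $E = \mathrm{diag}(-1,1,1,1)$. In cases 10, 11, 20, 21, two $c_k$ lie in $\{5,\dots,8\}$ (so $c_3, c_4 \ge 5$) and two $p_l$ lie in $\{1,\dots,4\}$ (so $p_1, p_2 \le 4$), yielding $D = \mathrm{diag}(1,1,-1,-1)$ and $E = \mathrm{diag}(-1,-1,1,1)$. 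Lemma \ref{sylvesterlemma} applied to $P^t D P = E$ then gives $P = \overline{\sqrt{D}}\, C \sqrt{E}$ for some orthogonal $C$; since $\sqrt{D}$ and $\sqrt{E}$ are diagonal with nonzero entries, the zero pattern of $P$ coincides with that of $C$, so the forced zeros in Table \ref{14nontrivial} transfer verbatim to the same entries of $C$.

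The main obstacle is tracking two independent sign conventions simultaneously---the column flips from step (1) (controlled by $\delta_l$) and the row flips from step (2) (controlled by $\epsilon_k$)---and verifying that they combine cleanly into the bilinear form $I_4 + D_\delta P^t E_\epsilon P$ without spurious cross terms, despite the fact that leading and parameter columns interleave arbitrarily in the ordering of Table \ref{O2basis}. Once this identity is in hand, the remainder of the proof is combinatorial bookkeeping over the 14 cases of $\mathcal{S}$ followed by a direct appeal to Lemma \ref{sylvesterlemma}.
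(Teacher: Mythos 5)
Your argument is correct and lands on the same characterization $P^t D P = E$ as the paper, but it gets there by a more uniform route. The paper's proof runs Loday's algorithm separately in each of the 14 cases: it carries out the computation of $[R'']$, $[R^\pm]$ and $[T]$ explicitly for Case 1, then for each remaining case records which entries of $W,X,Y,Z$ are forced to vanish and observes that the normalized matrix $T''$ again has the form $P^t D P - E$ for the appropriate $D,E$, supplementing this with explicit Gr\"obner bases of the resulting ideals. You instead establish once and for all the closed-form identity $[T]_{\mathrm{par}} = I_4 + D_\delta P^t E_\epsilon P$ by tracking the column-flip signs $\delta_l$ from step (1) and the row-flip signs $\epsilon_k$ from step (2) through the algorithm; I verified the bookkeeping on Case 2 (leading columns $1,2,3,5$, parameter columns $4,6,7,8$), where it reproduces the paper's $T''=P^tDP-E$ up to the harmless left multiplication by a $\pm1$ diagonal matrix that the paper performs when making diagonal entries monic, and the identification $D=E_\epsilon$, $E=-D_\delta$ agrees with all entries of Table \ref{14nontrivial}. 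The three groupings of $(D,E)$ then fall out of counting how many leading columns lie in $\{5,\dots,8\}$, and your uniform treatment automatically covers case 7 (leading columns $1,2,4,6$), which belongs to $\mathcal{S}$ and is handled in the paper's proof but is inadvertently missing from the bullet list in the theorem statement. What your version omits, relative to the paper, is the explicit Gr\"obner-basis description of the solution varieties, but that is supplementary to the statement as given; the final appeal to Lemma \ref{sylvesterlemma} and the observation that the diagonal factors $\overline{\sqrt{D}}$, $\sqrt{E}$ preserve the zero pattern are the same in both proofs.
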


\begin{proof}
We verify the claims case-by-case.

\noindent $\bullet$
\emph{Case 1}:
This case can be solved in terms of 4-dimensional Euclidean geometry.
The relation matrix is $[ \, R \, ] = [ \, I_4 \mid P \, ]$ where $P = [ W, X, Y, Z ]$.
Clearly $[ \, R' \, ] = [ \, R'' \, ] = [ \, I_4 \mid -P \, ]$, 
and hence $[ \, R^\pm \, ] = [ \, P^t \mid I_4 \, ]$.
We obtain 
  \[
  \left[ \begin{array}{c} R \\ R^\pm \end{array} \right]
  =  
  \left[ \begin{array}{c|c} I_4 & P \\ \midrule P^t & I_4 \end{array} \right]
  \xrightarrow{\; \text{$[R]$ reduces $[R^\pm]$} \;}
  \left[ \begin{array}{c|c} I_4 & P \\ \midrule O & T' \end{array} \right]
  =
  \left[ \begin{array}{c} R \\ T \end{array} \right]
  \]
In this and the remaining cases we simplify $T$ by making its diagonal entries monic:
we divide row $i$ by the leading coefficient of the $i$-th diagonal entry for $1 \le i \le 4$.
With these sign changes, $T'$ becomes $T''$ where
  \[
  T'' = -T' = P^t P - I =
  \left[ 
  \begin{array}{cccc}
  W \cdot W - 1 & W \cdot X & W \cdot Y & W \cdot Z \\
  W \cdot X & X \cdot X - 1 & X \cdot Y & X \cdot Z \\
  W \cdot Y & X \cdot Y & Y \cdot Y - 1 & Y \cdot Z \\
  W \cdot Z & X \cdot Z & Y \cdot Z & Z \cdot Z - 1
  \end{array}
  \right]
  \]
Hence, in order for the matrix $P$ of parameter values to belong to the zero set of the ideal
generated by $T''$, it is necessary and sufficient that its columns $W, X, Y, Z$
form an orthonormal basis of $\mathbb{R}^4$.

The Gr\"obner basis for the ideal generated by $T''$ has 141 elements,
the greatest of which in lex order is this polynomial of degree 5 with 14 terms:
\begin{align*}
&
W_4X_4Y_3Z_1Z_2-W_3X_4Y_4Z_1Z_2-W_4X_4Y_1Z_2Z_3+W_1X_4Y_4Z_2Z_3-W_4X_2Y_3Z_1Z_4
\\[-1mm]
&
+W_3X_2Y_4Z_1Z_4+W_3X_4Y_1Z_2Z_4-W_1X_4Y_3Z_2Z_4+W_4X_2Y_1Z_3Z_4
\\[-1mm]
&
-W_1X_2Y_4Z_3Z_4-W_3X_2Y_1Z_4^2+W_1X_2Y_3Z_4^2+W_3X_2Y_1-W_1X_2Y_3.
\end{align*}
It is easier to find the zero set from the generators than the Gr\"obner basis.

\noindent $\bullet$
\emph{Case 2}:
We define the symmetric bilinear form $\langle U, V \rangle$ to have signature equal to the diagonal
entries of $D$.
We obtain
\begin{align*}
T'' 
&=
\left[ 
\begin{array}{@{}cccc@{}}
\langle W, W \rangle {+} 1 & \langle W, X \rangle & \langle W, Y \rangle & \langle W, Z \rangle \\
\langle W, X \rangle & \langle X, X \rangle {-} 1 & \langle X, Y \rangle & \langle X, Z \rangle \\
\langle W, Y \rangle & \langle X, Y \rangle & \langle Y, Y \rangle {-} 1 & \langle Y, Z \rangle \\
\langle W, Z \rangle & \langle X, Z \rangle & \langle Y, Z \rangle & \langle Z, Z \rangle {-} 1
\end{array}
\right]
\qquad
W = [W_1,W_2,W_3,0]^t
\\
&= 
P^t D P - E.
\end{align*}
Hence 
$P = \mathrm{diag}(1,1,1,\mp i) \, C \, \mathrm{diag}(\pm i,1,1,1)$ for some orthogonal matrix $C$.
Since $W_4 = 0$ we require that $P$ (and hence $C$) has 0 in the lower left corner.

The Gr\"obner basis for the ideal generated by the entries of $T''$ has 112 elements,
the greatest of which in lex order is this polynomial of degree 7 with 36 terms:
\begin{align*}
&
W_1X_3Y_3Y_4^2Z_2Z_3-W_1X_4Y_4^3Z_2Z_3+W_3X_1Y_2Y_4^2Z_3^2-W_2X_1Y_3Y_4^2Z_3^2
\\[-1mm]
&
-W_1X_2Y_3Y_4^2Z_3^2-W_1X_3Y_3^2Y_4Z_2Z_4+W_1X_3Y_4^3Z_2Z_4-2W_3X_1Y_2Y_3Y_4Z_3Z_4
\\[-1mm]
&
+2W_2X_1Y_3^2Y_4Z_3Z_4+W_1X_2Y_3^2Y_4Z_3Z_4+W_1X_2Y_4^3Z_3Z_4+W_1X_3Y_4Z_2Z_3^2Z_4
\\[-1mm]
&
-W_1X_2Y_4Z_3^3Z_4+W_3X_1Y_2Y_3^2Z_4^2-W_2X_1Y_3^3Z_4^2-W_1X_2Y_3Y_4^2Z_4^2
\\[-1mm]
&
-W_1X_3Y_3Z_2Z_3Z_4^2-2W_1X_4Y_4Z_2Z_3Z_4^2+W_1X_2Y_3Z_3^2Z_4^2+2W_1X_3Y_4Z_2Z_4^3
\\[-1mm]
&
+W_1X_4Y_2Z_3Z_4^3-W_1X_3Y_2Z_4^4+W_3X_1Y_2Y_3^2-W_2X_1Y_3^3
\\[-1mm]
&
+W_2X_1Y_3Y_4^2+W_1X_3Y_3Z_2Z_3-W_1X_4Y_4Z_2Z_3+W_3X_1Y_2Z_3^2
\\[-1mm]
&
-W_1X_3Y_2Z_3^2-W_2X_1Y_3Z_3^2+W_1X_3Y_4Z_2Z_4+W_1X_4Y_2Z_3Z_4
\\[-1mm]
&
+W_1X_2Y_4Z_3Z_4-W_1X_3Y_2Z_4^2+W_2X_1Y_3Z_4^2+W_2X_1Y_3.
\end{align*}

\noindent $\bullet$
\emph{Case 3}:
Define $D, E$ as in Case 2.
Then $T''$ has the same form as Case 2 but
$W = [W_1,W_2,W_3,0]^t$ and
$X = [X_1,X_2,X_3,0]^t$.
The Gr\"obner basis for the ideal generated by $T''$ has 63 elements; the greatest is
\[
W_3X_3Y_1Y_2-W_2X_3Y_1Y_3-W_3X_1Y_2Y_3+W_2X_1Y_3^2+W_2X_1Z_4^2.
\]

\noindent $\bullet$
\emph{Case 4}:
Define $D, E$ as in Case 2.
Then $T''$ has the same form as Case 2 but
$W = [W_1,W_2,W_3,0]^t$,
$X = [X_1,X_2,X_3,0]^t$,
$Y = [Y_1,Y_2,Y_3,0]^t$.
The Gr\"obner basis for the ideal generated by $T''$ has 31 elements;
the greatest is
\[
W_3X_3Y_1Y_2-W_2X_3Y_1Y_3-W_3X_1Y_2Y_3+W_2X_1Y_3^2-W_2X_1.
\]

\noindent $\bullet$
\emph{Case 6}:
Define $D, E$ as in Case 2.
Then $T''$ has the same form as Case 2 but
$W = [W_1,W_2,0,0]^t$.
The Gr\"obner basis for the ideal generated by $T''$ has 72 elements;
the greatest is
\begin{align*}
&
X_4Y_1Y_2Z_3^2-X_1Y_2Y_4Z_3^2+X_4 Z_1 Z_2 Z_3^2-X_1Z_2Z_3^2Z_4-X_4Y_1Y_2Z_4^2+X_1 Y_2 Y_4 Z_4^2
\\
&
-X_4Z_1Z_2Z_4^2+X_1 Z_2 Z_4^3-X_4Y_1Y_2+X_1 Y_2 Y_4+X_1 Z_2 Z_4.
\end{align*}

\noindent $\bullet$
\emph{Case 7}:
Define $D, E$ as in Case 2.
Then $T''$ has the same form as Case 2 but
$W = [W_1,W_2,0,0]^t$ and
$X = [X_1,X_2,X_3,0]^t$.
The Gr\"obner basis for the ideal generated by $T''$ has 50 elements;
the greatest is
$X_1X_2Y_3^2+Y_1Y_2Y_3^2+Y_1Y_2Z_3^2+X_1X_2Z_4^2$.

\noindent $\bullet$
\emph{Case 8}:
Define $D, E$ as in Case 2.
Then $T''$ has the same form as Case 2 but
$W = [W_1,W_2,0,0]^t$,
$X = [X_1,X_2,X_3,0]^t$,
$Y = [Y_1,Y_2,Y_3,0]^t$.
The Gr\"obner basis for the ideal generated by $T''$ has 50 elements;
the greatest is
$X_1X_2Y_3^2+Y_1Y_2Y_3^2-X_1X_2$.

\noindent $\bullet$
\emph{Case 10}:
We now have 
$D = \mathrm{diag}(1,1,{-}1,{-}1)$ and $E = \mathrm{diag}({-}1,{-}1,1,1)$.
We define $\langle U, V \rangle$ to have signature equal to the diagonal
entries of $D$.
We obtain
\begin{align*}
T''
&=
\left[ 
\begin{array}{@{}cccc@{}}
\langle W, W \rangle {+} 1 & \langle W, X \rangle & \langle W, Y \rangle & \langle W, Z \rangle \\
\langle W, X \rangle & \langle X, X \rangle {+} 1 & \langle X, Y \rangle & \langle X, Z \rangle \\
\langle W, Y \rangle & \langle X, Y \rangle & \langle Y, Y \rangle {-} 1 & \langle Y, Z \rangle \\
\langle W, Z \rangle & \langle X, Z \rangle & \langle Y, Z \rangle & \langle Z, Z \rangle {-} 1
\end{array}
\right]
\qquad
\begin{array}{l}
W = [W_1,W_2,0,0]^t, \\
X = [X_1,X_2,0,0]^t
\end{array}
\\
&= 
P^t D P {-} E.
\end{align*}
Lemma \ref{sylvesterlemma} shows that 
$P = \mathrm{diag}(1,1,\mp i,\mp i) \, C \, \mathrm{diag}(\pm i,\pm i,1,1)$ for some orthogonal matrix $C$.
Since $W_3 = W_4 = X_3 = X_4 = 0$ we require that $P$ (and hence $C$) has a $2 \times 2$ zero block
in the lower left corner.
The Gr\"obner basis for the ideal generated by $T''$ has 16 elements;
the greatest is
$W_2X_1X_2-W_1X_2^2-W_1$.

\noindent $\bullet$
\emph{Case 11}:
Define $D, E$ as in Case 10.
Then $T''$ has the same form as Case 10 but
$W = [W_1,W_2,0,0]^t$,
$X = [X_1,X_2,0,0]^t$,
$Y = [Y_1,Y_2,Y_3,0]^t$.
The Gr\"obner basis for the ideal generated by $T''$ has 13 elements;
the greatest is
$W_2X_1X_2-W_1X_2^2-W_1$.

\noindent $\bullet$
\emph{Case 16}:
Define $D, E$ as in Case 2.
Then $T''$ has the same form as Case 2 but
$W = [W_1,0,0,0]^t$.
The Gr\"obner basis for the ideal generated by $T''$ has 31 elements; the greatest is
$X_4Y_4Z_2Z_3-X_3Y_4Z_2Z_4-X_4Y_2Z_3Z_4+X_3Y_2Z_4^2+X_3Y_2$.

\noindent $\bullet$
\emph{Case 17}:
Define $D, E$ as in Case 2.
Then $T''$ has the same form as Case 2 but
$W = [W_1,0,0,0]^t$ and
$X = [X_1,X_2,X_3,0]^t$.
The Gr\"obner basis for the ideal generated by $T''$ has 23 elements; the greatest is
$Y_2Y_3Z_4^2+Z_2Z_3Z_4^2+Y_2Y_3$.

\noindent $\bullet$
\emph{Case 18}:
Define $D, E$ as in Case 2.
Then $T''$ has the same form as Case 2 but
$W = [W_1,0,0,0]^t$,
$X = [X_1,X_2,X_3,0]^t$,
$Y = [Y_1,Y_2,Y_3,0]^t$.
We have $T'' = P^t D P {-} E$ and the rest 
is the same as in Case 2 except that $W_2 = W_3 = W_4 = X_4 = Y_4 = 0$.
The Gr\"obner basis for the ideal generated by the entries of $T''$ has 33 elements,
the greatest of which is
$X_3Y_2Y_3-X_2Y_3^2+X_2$.

\noindent $\bullet$
\emph{Case 20}:
Define $D, E$ as in Case 10.
Then $T''$ has the same form as Case 10 but
$W = [W_1,0,0,0]^t$ and
$X = [X_1,X_2,X_3,0]^t$.
The Gr\"obner basis for the ideal generated by $T''$ has 13 elements; the greatest is
$Y_4Z_3Z_4-Y_3Z_4^2-Y_3$.

\noindent $\bullet$
\emph{Case 21}:
Define $D, E$ as in Case 10.
Then $T''$ has the same form as Case 10 but
$W = [W_1,0,0,0]^t$,
$X = [X_1,X_2,0,0]^t$,
$Y = [Y_1,Y_2,Y_3,0]^t$.
The Gr\"obner basis for the ideal generated by the entries of $T''$ has 10 elements;
the greatest is
$W_1^2+1$.
\end{proof}

\begin{example}
In case 21, where $D, E$ are as in case 10, the Gr\"obner basis is
\[
Z_3, \;\; Z_2, \;\; Z_1, \;\; Y_2, \;\; Y_1, \;\; X_1, \;\; Z_4^2+1, \;\; Y_3^2+1, \;\; X_2^2+1, \;\; W_1^2+1.
\]
From this we immediately obtain the following zero set for the ideal:
  \[
  W = [ \pm i, 0, 0, 0 ], \quad
  X = [ 0, \pm i, 0, 0 ], \quad
  Y = [ 0, 0, \pm i, 0 ], \quad
  Z = [ 0, 0, 0, \pm i ].
  \]
The corresponding relations between the operations $\opr, \opl$ are:
  \begin{alignat*}{2}
  ( a \opr b ) \opr c &= \mp i \, ( a \opr b ) \opl c, 
  &\qquad\qquad
  ( a \opl b ) \opr c &= \mp i \, ( a \opl b ) \opl c,
  \\[-1mm]
  a \opr ( b \opr c ) &= \mp i \, a \opr ( b \opl c ), 
  &\qquad\qquad
  a \opl ( b \opr c ) &= \mp i \, a \opl ( b \opl c ).
  \end{alignat*}
``Whenever the second operation changes, the multiplier $\pm i$ appears.''
\end{example}

\begin{example}
In case 18, for which $D, E$ are as in case 2, the relation matrix is
  \[
  \left[ \begin{array}{@{\;}cccccccc@{\;}} 
  1 & W_1 & 0 & 0 & X_1 & Y_1 & 0 & Z_1 \\ 
  0 & 0 & 1 & 0 & X_2 & Y_2 & 0 & Z_2 \\ 
  0 & 0 & 0 & 1 & X_3 & Y_3 & 0 & Z_3 \\ 
  0 & 0 & 0 & 0 & 0 & 0 & 1 & Z_4
  \end{array} \right] 
  \]
and the Gr\"obner basis is
\begin{align*}
&
Z_3, \;\; Z_2, \;\; Z_1, \;\; Y_1, \;\; X_1, \;\; Z_4^2 + 1, \;\; Y_2^2 + Y_3^2 - 1, \;\; X_2 Y_2 + X_3 Y_3, \;\; X_3^2 + Y_3^2 - 1,
\\
&
X_2 X_3 + Y_2 Y_3, \;\; X_2^2 - Y_3^2, \;\; W_1^2 + 1, \;\; X_3 Y_2 Y_3 - X_2 Y_3^2 + X_2.
\end{align*}
From this we obtain the following one-parameter family of solutions:
\begin{align*}
&
W_1 = \pm i, \quad 
X_1 = 0, \quad 
X_2 = \mathrm{free}, \quad 
X_3 = \pm \sqrt{1-X_2^2}, \quad 
Y_1 = 0, 
\\[-2mm]
&
Y_2 = \mp \sqrt{1-X_2^2}, \quad 
Y_3 = X_2, \quad 
Z_1 = 0, \quad 
Z_2 = 0, \quad 
Z_3 = 0, \quad 
Z_4 = \pm i.
\end{align*}
The second solution is obtained by changing the signs of $Y_2$ and $Y_3$.
The first one-parameter family gives this relation matrix (writing $\lambda$ for $X_2$):
  \[
  \left[ \begin{array}{@{\;}cccccccc@{\;}} 
  1 & \pm i & 0 & 0 & 0 & 0 & 0 & 0 \\ 
  0 & 0 & 1 & 0 & \lambda & \mp \sqrt{1{-}\lambda^2} & 0 & 0 \\ 
  0 & 0 & 0 & 1 & \pm \sqrt{1{-}\lambda^2} & \lambda & 0 & 0 \\ 
  0 & 0 & 0 & 0 & 0 & 0 & 1 & \pm i
  \end{array} \right] 
  \]
We leave it to the reader to write down the relations between the operations $\opr, \opl$.
\end{example}

%%%%%%%%%%%%%%%%%%%%%%%%%%%%%%%%%%%%%%%%%%%%%%%%%%%%%%%%%%%%%%%%%%%%%%%%%%%%%%%%%%%%%%%%%%%%%%%%%%%%

\section{Self-duality for two associative operations}
\label{associativesection}

\begin{definition}
The operad with two associative operations has the relation matrix $[A]$ 
whose row space is a 2-dimensional subspace $A \subset \calO(2)$:
  \[
  [A] =
  \left[
  \begin{array}{@{\,}r@{\;\;}r@{\;\;}r@{\;\;}r@{\;\;}r@{\;\;}r@{\;\;}r@{\;\;}r@{\,}}
  1 & 0 & 0 & 0 & -1 & 0 & 0 &  0 \\
  0 & 0 & 0 & 1 &  0 & 0 & 0 & -1
  \end{array}
  \right]
  \qquad
  \begin{array}{l}
  ( x_1 \opr x_2 ) \opr x_3 - x_1 \opr ( x_2 \opr x_3 ) \equiv 0
  \\
  ( x_1 \opl x_2 ) \opl x_3 - x_1 \opl ( x_2 \opl x_3 ) \equiv 0 
  \end{array} 
  \]
Linear combinations of $\opr, \opl$ are associative if and only if the operations satisfy 
the compatibility relation (Definition \ref{2opdefinition}).
\end{definition}

Let $[R]$ be a relation matrix of rank 4 for an operad with two binary operations.
$[R]$ is in RCF with leading 1s in columns $j_1, j_2, j_3, j_4$.
Since the operations are associative, $A \subset R$, and stacking $[R]$
on top of $[A]$ gives a matrix of rank 4.
Since $A$ has leading 1s in columns 1 and 4, we have $\{ 1, 4 \} \subset \{ j_1, j_2, j_3, j_4 \}$.
There are $\binom{6}{2} = 15$ cases for the other two columns;
see Table \ref{15relmats}.

\begin{table}[ht]
\[
\begin{array}{r@{\;\;}r@{\;\;}r} 
1\!\! 
\left[ 
\begin{array}{@{\,}c@{\;}c@{\;}c@{\;}c@{\;}c@{\;}c@{\;}c@{\;}c@{\,}} 
  1 &   0 &   0 &   0 & W_1 & X_1 & Y_1 & Z_1 \\[-0.5mm] 
  0 &   1 &   0 &   0 & W_2 & X_2 & Y_2 & Z_2 \\[-0.5mm] 
  0 &   0 &   1 &   0 & W_3 & X_3 & Y_3 & Z_3 \\[-0.5mm] 
  0 &   0 &   0 &   1 & W_4 & X_4 & Y_4 & Z_4 
\end{array} 
\right] 
& 
2\!\! 
\left[ 
\begin{array}{@{\,}c@{\;}c@{\;}c@{\;}c@{\;}c@{\;}c@{\;}c@{\;}c@{\,}} 
  1 &   0 & W_1 &   0 &   0 & X_1 & Y_1 & Z_1 \\[-0.5mm] 
  0 &   1 & W_2 &   0 &   0 & X_2 & Y_2 & Z_2 \\[-0.5mm] 
  0 &   0 &   0 &   1 &   0 & X_3 & Y_3 & Z_3 \\[-0.5mm] 
  0 &   0 &   0 &   0 &   1 & X_4 & Y_4 & Z_4 
\end{array} 
\right] 
& 
3\!\! 
\left[ 
\begin{array}{@{\,}c@{\;}c@{\;}c@{\;}c@{\;}c@{\;}c@{\;}c@{\;}c@{\,}} 
  1 &   0 & W_1 &   0 & X_1 &   0 & Y_1 & Z_1 \\[-0.5mm] 
  0 &   1 & W_2 &   0 & X_2 &   0 & Y_2 & Z_2 \\[-0.5mm] 
  0 &   0 &   0 &   1 & X_3 &   0 & Y_3 & Z_3 \\[-0.5mm] 
  0 &   0 &   0 &   0 &   0 &   1 & Y_4 & Z_4 
\end{array} 
\right] 
\\[8mm] 
4\!\! 
\left[ 
\begin{array}{@{\,}c@{\;}c@{\;}c@{\;}c@{\;}c@{\;}c@{\;}c@{\;}c@{\,}} 
  1 &   0 & W_1 &   0 & X_1 & Y_1 &   0 & Z_1 \\[-0.5mm] 
  0 &   1 & W_2 &   0 & X_2 & Y_2 &   0 & Z_2 \\[-0.5mm] 
  0 &   0 &   0 &   1 & X_3 & Y_3 &   0 & Z_3 \\[-0.5mm] 
  0 &   0 &   0 &   0 &   0 &   0 &   1 & Z_4 
\end{array} 
\right] 
& 
5\!\! 
\left[ 
\begin{array}{@{\,}c@{\;}c@{\;}c@{\;}c@{\;}c@{\;}c@{\;}c@{\;}c@{\,}} 
  1 &   0 & W_1 &   0 & X_1 & Y_1 & Z_1 &   0 \\[-0.5mm] 
  0 &   1 & W_2 &   0 & X_2 & Y_2 & Z_2 &   0 \\[-0.5mm] 
  0 &   0 &   0 &   1 & X_3 & Y_3 & Z_3 &   0 \\[-0.5mm] 
  0 &   0 &   0 &   0 &   0 &   0 &   0 &   1 
\end{array} 
\right] 
& 
6\!\! 
\left[ 
\begin{array}{@{\,}c@{\;}c@{\;}c@{\;}c@{\;}c@{\;}c@{\;}c@{\;}c@{\,}} 
  1 & W_1 &   0 &   0 &   0 & X_1 & Y_1 & Z_1 \\[-0.5mm] 
  0 &   0 &   1 &   0 &   0 & X_2 & Y_2 & Z_2 \\[-0.5mm] 
  0 &   0 &   0 &   1 &   0 & X_3 & Y_3 & Z_3 \\[-0.5mm] 
  0 &   0 &   0 &   0 &   1 & X_4 & Y_4 & Z_4 
\end{array} 
\right] 
\\[8mm] 
7\!\! 
\left[ 
\begin{array}{@{\,}c@{\;}c@{\;}c@{\;}c@{\;}c@{\;}c@{\;}c@{\;}c@{\,}} 
  1 & W_1 &   0 &   0 & X_1 &   0 & Y_1 & Z_1 \\[-0.5mm] 
  0 &   0 &   1 &   0 & X_2 &   0 & Y_2 & Z_2 \\[-0.5mm] 
  0 &   0 &   0 &   1 & X_3 &   0 & Y_3 & Z_3 \\[-0.5mm] 
  0 &   0 &   0 &   0 &   0 &   1 & Y_4 & Z_4 
\end{array} 
\right] 
& 
8\!\! 
\left[ 
\begin{array}{@{\,}c@{\;}c@{\;}c@{\;}c@{\;}c@{\;}c@{\;}c@{\;}c@{\,}} 
  1 & W_1 &   0 &   0 & X_1 & Y_1 &   0 & Z_1 \\[-0.5mm] 
  0 &   0 &   1 &   0 & X_2 & Y_2 &   0 & Z_2 \\[-0.5mm] 
  0 &   0 &   0 &   1 & X_3 & Y_3 &   0 & Z_3 \\[-0.5mm] 
  0 &   0 &   0 &   0 &   0 &   0 &   1 & Z_4 
\end{array} 
\right] 
& 
9\!\! 
\left[ 
\begin{array}{@{\,}c@{\;}c@{\;}c@{\;}c@{\;}c@{\;}c@{\;}c@{\;}c@{\,}} 
  1 & W_1 &   0 &   0 & X_1 & Y_1 & Z_1 &   0 \\[-0.5mm] 
  0 &   0 &   1 &   0 & X_2 & Y_2 & Z_2 &   0 \\[-0.5mm] 
  0 &   0 &   0 &   1 & X_3 & Y_3 & Z_3 &   0 \\[-0.5mm] 
  0 &   0 &   0 &   0 &   0 &   0 &   0 &   1 
\end{array} 
\right] 
\\[8mm] 
10\!\! 
\left[ 
\begin{array}{@{\,}c@{\;}c@{\;}c@{\;}c@{\;}c@{\;}c@{\;}c@{\;}c@{\,}} 
  1 & W_1 & X_1 &   0 &   0 &   0 & Y_1 & Z_1 \\[-0.5mm] 
  0 &   0 &   0 &   1 &   0 &   0 & Y_2 & Z_2 \\[-0.5mm] 
  0 &   0 &   0 &   0 &   1 &   0 & Y_3 & Z_3 \\[-0.5mm] 
  0 &   0 &   0 &   0 &   0 &   1 & Y_4 & Z_4 
\end{array} 
\right] 
& 
11\!\! 
\left[ 
\begin{array}{@{\,}c@{\;}c@{\;}c@{\;}c@{\;}c@{\;}c@{\;}c@{\;}c@{\,}} 
  1 & W_1 & X_1 &   0 &   0 & Y_1 &   0 & Z_1 \\[-0.5mm] 
  0 &   0 &   0 &   1 &   0 & Y_2 &   0 & Z_2 \\[-0.5mm] 
  0 &   0 &   0 &   0 &   1 & Y_3 &   0 & Z_3 \\[-0.5mm] 
  0 &   0 &   0 &   0 &   0 &   0 &   1 & Z_4 
\end{array} 
\right] 
& 
12\!\! 
\left[ 
\begin{array}{@{\,}c@{\;}c@{\;}c@{\;}c@{\;}c@{\;}c@{\;}c@{\;}c@{\,}} 
  1 & W_1 & X_1 &   0 &   0 & Y_1 & Z_1 &   0 \\[-0.5mm] 
  0 &   0 &   0 &   1 &   0 & Y_2 & Z_2 &   0 \\[-0.5mm] 
  0 &   0 &   0 &   0 &   1 & Y_3 & Z_3 &   0 \\[-0.5mm] 
  0 &   0 &   0 &   0 &   0 &   0 &   0 &   1 
\end{array} 
\right] 
\\[8mm] 
13\!\! 
\left[ 
\begin{array}{@{\,}c@{\;}c@{\;}c@{\;}c@{\;}c@{\;}c@{\;}c@{\;}c@{\,}} 
  1 & W_1 & X_1 &   0 & Y_1 &   0 &   0 & Z_1 \\[-0.5mm] 
  0 &   0 &   0 &   1 & Y_2 &   0 &   0 & Z_2 \\[-0.5mm] 
  0 &   0 &   0 &   0 &   0 &   1 &   0 & Z_3 \\[-0.5mm] 
  0 &   0 &   0 &   0 &   0 &   0 &   1 & Z_4 
\end{array} 
\right] 
& 
14\!\! 
\left[ 
\begin{array}{@{\,}c@{\;}c@{\;}c@{\;}c@{\;}c@{\;}c@{\;}c@{\;}c@{\,}} 
  1 & W_1 & X_1 &   0 & Y_1 &   0 & Z_1 &   0 \\[-0.5mm] 
  0 &   0 &   0 &   1 & Y_2 &   0 & Z_2 &   0 \\[-0.5mm] 
  0 &   0 &   0 &   0 &   0 &   1 & Z_3 &   0 \\[-0.5mm] 
  0 &   0 &   0 &   0 &   0 &   0 &   0 &   1 
\end{array} 
\right] 
& 
15\!\! 
\left[ 
\begin{array}{@{\,}c@{\;}c@{\;}c@{\;}c@{\;}c@{\;}c@{\;}c@{\;}c@{\,}} 
  1 & W_1 & X_1 &   0 & Y_1 & Z_1 &   0 &   0 \\[-0.5mm] 
  0 &   0 &   0 &   1 & Y_2 & Z_2 &   0 &   0 \\[-0.5mm] 
  0 &   0 &   0 &   0 &   0 &   0 &   1 &   0 \\[-0.5mm] 
  0 &   0 &   0 &   0 &   0 &   0 &   0 &   1 
\end{array} 
\right] 
\end{array} 
\]
\caption{Relation matrices for two associative operations}
\label{15relmats}
\end{table}

\begin{definition}
To obtain necessary and sufficient conditions for the operations to be associative, 
we stack $[R]$ on top of $[A]$, and use the leading 1s in $[R]$ to eliminate the nonzero entries
in columns $j_1,\dots,j_4$ in $[A]$.
Then rows 5, 6 are zero if and only if $R + A = R$, that is $A \subset R$.
The equations obtained by setting the entries in rows 5, 6 to zero are the
\textbf{associativity conditions} on the parameters.
\end{definition}

\begin{example}
Consider the relation matrix $[R]$ in case 1:
  \[
  \left[
  \begin{array}{c}
  R
  \\
  A
  \end{array}
  \right]
  =
  \left[ 
  \begin{array}{@{\,}c@{\;}c@{\;}c@{\;}c@{\;}c@{\;}c@{\;}c@{\;}c@{\,}} 
  1 &   0 &   0 &   0 & W_1 & X_1 & Y_1 & Z_1 \\ 
  0 &   1 &   0 &   0 & W_2 & X_2 & Y_2 & Z_2 \\ 
  0 &   0 &   1 &   0 & W_3 & X_3 & Y_3 & Z_3 \\ 
  0 &   0 &   0 &   1 & W_4 & X_4 & Y_4 & Z_4 \\
  \midrule  
  1  &  0  &  0  &  0  & -1       &  0       &  0       &  0       \\
  0  &  0  &  0  &  1  &  0       &  0       &  0       & -1
  \end{array}
  \right]
  \longrightarrow
  \left[
  \begin{array}{@{\,}c@{\;}c@{\;}c@{\;}c@{\;}c@{\;\;}c@{\;\;}c@{\;\;}c@{\,}} 
  1 &   0 &   0 &   0 & W_1 & X_1 & Y_1 & Z_1 \\ 
  0 &   1 &   0 &   0 & W_2 & X_2 & Y_2 & Z_2 \\ 
  0 &   0 &   1 &   0 & W_3 & X_3 & Y_3 & Z_3 \\ 
  0 &   0 &   0 &   1 & W_4 & X_4 & Y_4 & Z_4 \\
  \midrule
  0  &  0  &  0  &  0  &  -W_1{-}1  &  -X_1   &  -Y_1   &  -Z_1    \\
  0  &  0  &  0  &  0  &  -W_4   &  -X_4  &  -Y_4  &  -Z_4{-}1
  \end{array}
  \right]
  \]
Hence $A \subset R$ if and only if 
$W_1, Z_4 = -1$ and $X_1, Y_1, Z_1, W_4, X_4, Y_4 = 0$,
which means that rows 1 and 4 of $[R]$ coincide with the two rows of $[A]$.
\end{example}

The relation matrices obtained after applying the associativity conditions appear 
in Table \ref{15newrelmats}.
The number of parameters in each case has dropped.

\begin{table}[ht]
\[
\begin{array}{r@{}l@{\;}r@{}l@{\;}r@{}l} 
1\! 
&\left[ 
\begin{array}{@{}c@{\;}c@{\;}c@{\;}c@{\;}c@{\;}c@{\;}c@{\;}c@{}} 
  1 &   0 &   0 &   0 &  -1 &   0 &   0 &   0 \\[-0.5mm] 
  0 &   1 &   0 &   0 & W_2 & X_2 & Y_2 & Z_2 \\[-0.5mm] 
  0 &   0 &   1 &   0 & W_3 & X_3 & Y_3 & Z_3 \\[-0.5mm] 
  0 &   0 &   0 &   1 &   0 &   0 &   0 &  -1 
\end{array} 
\right] 
& 
2\! 
&\left[ 
\begin{array}{@{}c@{\;}c@{\;}c@{\;}c@{\;}c@{\;}c@{\;}c@{\;}c@{}} 
  1 &   0 &   0 &   0 &   0 & X_1 & Y_1 & Z_1 \\[-0.5mm] 
  0 &   1 & W_2 &   0 &   0 & X_2 & Y_2 & Z_2 \\[-0.5mm] 
  0 &   0 &   0 &   1 &   0 &   0 &   0 &  -1 \\[-0.5mm] 
  0 &   0 &   0 &   0 &   1 & X_1 & Y_1 & Z_1 
\end{array} 
\right] 
& 
3\! 
&\left[ 
\begin{array}{@{}c@{\;}c@{\;}c@{\;}c@{\;}c@{\;}c@{\;}c@{\;}c@{}} 
  1 &   0 &   0 &   0 &  -1 &   0 &   0 &   0 \\[-0.5mm] 
  0 &   1 & W_2 &   0 & X_2 &   0 & Y_2 & Z_2 \\[-0.5mm] 
  0 &   0 &   0 &   1 &   0 &   0 &   0 &  -1 \\[-0.5mm] 
  0 &   0 &   0 &   0 &   0 &   1 & Y_4 & Z_4 
\end{array} 
\right] 
\\[8mm] 
4\! 
&\left[ 
\begin{array}{@{}c@{\;}c@{\;}c@{\;}c@{\;}c@{\;}c@{\;}c@{\;}c@{}} 
  1 &   0 &   0 &   0 &  -1 &   0 &   0 &   0 \\[-0.5mm] 
  0 &   1 & W_2 &   0 & X_2 & Y_2 &   0 & Z_2 \\[-0.5mm] 
  0 &   0 &   0 &   1 &   0 &   0 &   0 &  -1 \\[-0.5mm] 
  0 &   0 &   0 &   0 &   0 &   0 &   1 & Z_4 
\end{array} 
\right] 
& 
5\! 
&\left[ 
\begin{array}{@{}c@{\;}c@{\;}c@{\;}c@{\;}c@{\;}c@{\;}c@{\;}c@{}} 
  1 &   0 &   0 &   0 &  -1 &   0 &   0 &   0 \\[-0.5mm] 
  0 &   1 & W_2 &   0 & X_2 & Y_2 & Z_2 &   0 \\[-0.5mm] 
  0 &   0 &   0 &   1 &   0 &   0 &   0 &   0 \\[-0.5mm] 
  0 &   0 &   0 &   0 &   0 &   0 &   0 &   1 
\end{array} 
\right] 
& 
6\! 
&\left[ 
\begin{array}{@{}c@{\;}c@{\;}c@{\;}c@{\;}c@{\;}c@{\;}c@{\;}c@{}} 
  1 &   0 &   0 &   0 &   0 & X_1 & Y_1 & Z_1 \\[-0.5mm] 
  0 &   0 &   1 &   0 &   0 & X_2 & Y_2 & Z_2 \\[-0.5mm] 
  0 &   0 &   0 &   1 &   0 &   0 &   0 &  -1 \\[-0.5mm] 
  0 &   0 &   0 &   0 &   1 & X_1 & Y_1 & Z_1 
\end{array} 
\right] 
\\[8mm] 
7\! 
&\left[ 
\begin{array}{@{}c@{\;}c@{\;}c@{\;}c@{\;}c@{\;}c@{\;}c@{\;}c@{}} 
  1 &   0 &   0 &   0 &  -1 &   0 &   0 &   0 \\[-0.5mm] 
  0 &   0 &   1 &   0 & X_2 &   0 & Y_2 & Z_2 \\[-0.5mm] 
  0 &   0 &   0 &   1 &   0 &   0 &   0 &  -1 \\[-0.5mm] 
  0 &   0 &   0 &   0 &   0 &   1 & Y_4 & Z_4 
\end{array} 
\right] 
& 
8\! 
&\left[ 
\begin{array}{@{}c@{\;}c@{\;}c@{\;}c@{\;}c@{\;}c@{\;}c@{\;}c@{}} 
  1 &   0 &   0 &   0 &  -1 &   0 &   0 &   0 \\[-0.5mm] 
  0 &   0 &   1 &   0 & X_2 & Y_2 &   0 & Z_2 \\[-0.5mm] 
  0 &   0 &   0 &   1 &   0 &   0 &   0 &  -1 \\[-0.5mm] 
  0 &   0 &   0 &   0 &   0 &   0 &   1 & Z_4 
\end{array} 
\right] 
& 
9\! 
&\left[ 
\begin{array}{@{}c@{\;}c@{\;}c@{\;}c@{\;}c@{\;}c@{\;}c@{\;}c@{}} 
  1 &   0 &   0 &   0 &  -1 &   0 &   0 &   0 \\[-0.5mm] 
  0 &   0 &   1 &   0 & X_2 & Y_2 & Z_2 &   0 \\[-0.5mm] 
  0 &   0 &   0 &   1 &   0 &   0 &   0 &   0 \\[-0.5mm] 
  0 &   0 &   0 &   0 &   0 &   0 &   0 &   1 
\end{array} 
\right] 
\\[8mm] 
10\! 
&\left[ 
\begin{array}{@{}c@{\;}c@{\;}c@{\;}c@{\;}c@{\;}c@{\;}c@{\;}c@{}} 
  1 &   0 &   0 &   0 &   0 &   0 & Y_1 & Z_1 \\[-0.5mm] 
  0 &   0 &   0 &   1 &   0 &   0 &   0 &  -1 \\[-0.5mm] 
  0 &   0 &   0 &   0 &   1 &   0 & Y_1 & Z_1 \\[-0.5mm] 
  0 &   0 &   0 &   0 &   0 &   1 & Y_4 & Z_4 
\end{array} 
\right] 
& 
11\! 
&\left[ 
\begin{array}{@{}c@{\;}c@{\;}c@{\;}c@{\;}c@{\;}c@{\;}c@{\;}c@{}} 
  1 &   0 &   0 &   0 &   0 & Y_1 &   0 & Z_1 \\[-0.5mm] 
  0 &   0 &   0 &   1 &   0 &   0 &   0 &  -1 \\[-0.5mm] 
  0 &   0 &   0 &   0 &   1 & Y_1 &   0 & Z_1 \\[-0.5mm] 
  0 &   0 &   0 &   0 &   0 &   0 &   1 & Z_4 
\end{array} 
\right] 
& 
12\! 
&\left[ 
\begin{array}{@{}c@{\;}c@{\;}c@{\;}c@{\;}c@{\;}c@{\;}c@{\;}c@{}} 
  1 &   0 &   0 &   0 &   0 & Y_1 & Z_1 &   0 \\[-0.5mm] 
  0 &   0 &   0 &   1 &   0 &   0 &   0 &   0 \\[-0.5mm] 
  0 &   0 &   0 &   0 &   1 & Y_1 & Z_1 &   0 \\[-0.5mm] 
  0 &   0 &   0 &   0 &   0 &   0 &   0 &   1 
\end{array} 
\right] 
\\[8mm] 
13\! 
&\left[ 
\begin{array}{@{}c@{\;}c@{\;}c@{\;}c@{\;}c@{\;}c@{\;}c@{\;}c@{}} 
  1 &   0 &   0 &   0 &  -1 &   0 &   0 &   0 \\[-0.5mm] 
  0 &   0 &   0 &   1 &   0 &   0 &   0 &  -1 \\[-0.5mm] 
  0 &   0 &   0 &   0 &   0 &   1 &   0 & Z_3 \\[-0.5mm] 
  0 &   0 &   0 &   0 &   0 &   0 &   1 & Z_4 
\end{array} 
\right] 
& 
14\! 
&\left[ 
\begin{array}{@{}c@{\;}c@{\;}c@{\;}c@{\;}c@{\;}c@{\;}c@{\;}c@{}} 
  1 &   0 &   0 &   0 &  -1 &   0 &   0 &   0 \\[-0.5mm] 
  0 &   0 &   0 &   1 &   0 &   0 &   0 &   0 \\[-0.5mm] 
  0 &   0 &   0 &   0 &   0 &   1 & Z_3 &   0 \\[-0.5mm] 
  0 &   0 &   0 &   0 &   0 &   0 &   0 &   1 
\end{array} 
\right] 
& 
15\! 
&\left[ 
\begin{array}{@{}c@{\;}c@{\;}c@{\;}c@{\;}c@{\;}c@{\;}c@{\;}c@{}} 
  1 &   0 &   0 &   0 &  -1 &   0 &   0 &   0 \\[-0.5mm] 
  0 &   0 &   0 &   1 &   0 &   0 &   0 &   0 \\[-0.5mm] 
  0 &   0 &   0 &   0 &   0 &   0 &   1 &   0 \\[-0.5mm] 
  0 &   0 &   0 &   0 &   0 &   0 &   0 &   1 
\end{array} 
\right] 
\end{array}
\]
\caption{Relation matrices after applying associativity conditions}
\label{15newrelmats}
\end{table}

It remains to use Loday's algorithm (see Table \ref{lodayalg}) to determine which values of the parameters 
produce self-dual operads.

\begin{lemma} \label{lemma:k=6}
For cases $6,\dots,15$ no values of the parameters imply self-duality.
\end{lemma}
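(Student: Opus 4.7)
The plan is to mirror the argument of Lemma \ref{lemma35notself-dual}, where column 1 played the distinguished role in the nonassociative setting: here the key observation is that column 2 is absent from the pivot set of $[R]$ in each of cases $6,\dots,15$, and this will force a standard basis vector into $[R^\pm]$ that survives the final reduction to $[T]$.

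First I would verify directly from Table \ref{15newrelmats} that column 2 of the post-associativity relation matrix $[R]$ is identically zero in each of the ten cases. The four pivot columns of $[R]$ must include $1$ and $4$ (because both associators lie in $R$), and in cases $6,\dots,15$ the remaining two pivots are drawn from $\{3,5,6,7,8\}$; so column 2 is never a pivot. The only entry that could appear in column 2 in the pre-associativity matrix (the parameter $W_1$ of Table \ref{15relmats}) is killed by the associativity condition that the row $(1,0,0,0,-1,0,0,0)$ of $[A]$ lie in the row space of $[R]$.

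I would then run Loday's algorithm of Table \ref{lodayalg}. The sign changes in steps (1) and (2) touch only columns 5--8 and certain rows, so column 2 of $[R'']$ also remains zero. In step (3), $x_2$ is a free variable, and setting $x_2=1$ with all other free variables equal to $0$ forces each leading variable to vanish, since the only coefficients involved lie in the zero column 2 of $[R'']$. Consequently the standard basis vector $e_2=(0,1,0,0,0,0,0,0)$ appears as a row of $[R^\pm]$.

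Finally, in step (5) we reduce $[R^\pm]$ using the pivots of $[R]$. Since none of those pivots lies in column 2, the row $e_2$ is already in reduced form and survives as a row of $[T]$. Hence $[T]$ contains the entry $1$, the ideal generated by the entries of $[T]$ equals the unit ideal $\Phi$, and step (6) rules out any parameter values producing a self-dual operad. There is no serious obstacle here: the only routine check is confirming the column-2 vanishing, which is immediate from the pivot structure, and the rest of the argument is a verbatim echo of Lemma \ref{lemma35notself-dual} with column 1 replaced by column 2.
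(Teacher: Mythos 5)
Your proposal is correct and follows essentially the same route as the paper: the paper's (very terse) proof asserts that $[T]$ contains a nonzero scalar and defers to the mechanism of Lemma \ref{lemma35notself-dual}, which is exactly the zero-column argument you spell out. Your added observation — that column 2 is never a pivot in cases $6,\dots,15$ and that the lone candidate entry $W_1$ is annihilated by the associativity condition, so $e_2$ lands in $[R^\pm]$ and survives to $[T]$ — is a correct and welcome filling-in of the detail the paper leaves implicit.
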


\begin{proof}
In these cases $T$ contains a nonzero scalar; the rest follows Lemma \ref{lemma35notself-dual}.
\end{proof}

\begin{theorem} \label{maintheoremforthissection}
The quadratic nonsymmetric operad $\calQ$ with two associative binary operationsis self-dual
if and only if its relation matrix $[R]$ is one of:
  \[
  \left[ 
  \begin{array}{@{\,}c@{\;}c@{\;}c@{\;}c@{\;}c@{\;}c@{\;}c@{\;}c@{\,}}
  1 &   0 &   0 &   0 &  -1 &   0 &   0 &   0 \\[-0.5mm] 
  0 &   1 &   0 &   0 &   0 & \pm \lambda & \pm \sqrt{1{-}\lambda^2} &   0 \\[-0.5mm] 
  0 &   0 &   1 &   0 &   0 & \pm \sqrt{1{-}\lambda^2} & \lambda &   0 \\[-0.5mm] 
  0 &   0 &   0 &   1 &   0 &   0 &   0 &  -1 
  \end{array}
  \right]
  \;\;
  (\lambda \in \mathbb{F})
  \qquad\qquad
  \left[
  \begin{array}{@{\,}c@{\;}c@{\;}c@{\;}c@{\;}c@{\;}c@{\;}c@{\;}c@{\,}}
  1  &  0  &  0  &  0  &  -1  &  0  &  0  &  0 \\[-0.5mm]
  0  &  1  &  \pm i  &  0  &  0  &  0  &  0  &  0 \\[-0.5mm]
  0  &  0  &  0  &  1  &  0  &  0  &  0  &  -1 \\[-0.5mm]
  0  &  0  &  0  &  0  &  0  &  1  &  \pm i  &  0
  \end{array}
  \right] 
  \]
\end{theorem}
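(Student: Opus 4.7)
The plan is to run Loday's algorithm (Table \ref{lodayalg}) on each of the 15 relation matrices in Table \ref{15newrelmats} and to identify those parameter values that make $[T]=0$. Lemma \ref{lemma:k=6} already disposes of cases $6,\dots,15$, so only cases $1,\dots,5$ remain. In each of these five cases the first four rows of $[R]$ are reduced and fixed (two of them being the associativity rows), so the free parameters live in a small submatrix, and the computation of $[R^\pm]$ reduces to sign-flipping columns 5--8 and rows whose leading 1 lies there, then solving a linear system in the usual way.

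First I would dispatch cases $2,4,5$. The stated form of the relation matrices in Table \ref{15newrelmats} forces rigid coincidences between certain rows (for instance, in case 2, rows 1 and 4 force the last four entries of row 4 to equal those of row 1). Feeding these matrices through Loday's algorithm yields a $[T]$ whose entries generate a proper ideal only when additional parameters collapse in ways that violate the row-echelon assumption of the current case; equivalently, the ideal generated by $[T]$ contains $1$ (or, after reduction, contains a polynomial with nonzero constant term), so no parameter values give self-duality. I would compute a Gr\"obner basis in each case and exhibit the constant polynomial, as in Lemma \ref{lemma35notself-dual}.

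Next, case 1 is the main content. Here $[R] = [\,I_4 \mid P\,]$ with $P$ already constrained by associativity to have its first and last rows equal to the fixed vectors $(-1,0,0,0)$ and $(0,0,0,-1)$, so only the two middle rows carry parameters $W_2,X_2,Y_2,Z_2,W_3,X_3,Y_3,Z_3$. By the Case 1 computation in Theorem \ref{theorem14nonassociative}, self-duality is equivalent to $P^tP=I_4$, i.e.\ the rows (equivalently, columns) of $P$ being orthonormal. Combined with the associativity constraints, the conditions on the middle $2\times 4$ block become
\[
W_2=Z_2=W_3=Z_3=0,\quad X_2^2+Y_2^2=X_3^2+Y_3^2=1,\quad X_2X_3+Y_2Y_3=0.
\]
Writing $X_2=\pm\lambda$ yields $Y_2=\pm\sqrt{1-\lambda^2}$, and then the last two equations force $X_3=\pm\sqrt{1-\lambda^2}$, $Y_3=\lambda$ (with consistent sign choices). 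This recovers the first family in the theorem. I would present this as a short direct calculation rather than as a Gr\"obner basis output, since the geometry is transparent.

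Finally, case 3 is the source of the second family. The parameter matrix $P$ is partially zeroed out by the column positions of the leading 1s, and the analogue of $T'' = P^tDP-E$ with $D=\mathrm{diag}(1,1,1,-1)$, $E=\mathrm{diag}(-1,1,1,1)$ from Case 3 of Theorem \ref{theorem14nonassociative} must be combined with the two further associativity constraints (the fixed rows 1 and 3). The Gr\"obner basis computation of the resulting ideal should collapse to conditions forcing $W_2^2=-1$, $Y_4^2=-1$ and every remaining $X_2,Y_2,Z_2,Z_4$ to vanish, producing exactly the second matrix in the theorem. I expect the main obstacle to be verifying by Gr\"obner basis that there are no additional, spurious solutions in cases $2,4,5$ and in case 3, and that the $\pm$ signs in the two families are the only remaining degrees of freedom. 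This is a finite but delicate symbolic computation that I would perform (as the authors do elsewhere) with the graded reverse lex order, and then read off the zero set from the resulting basis as in the worked examples following Theorem \ref{theorem14nonassociative}.
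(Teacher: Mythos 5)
Your proposal is correct and follows the paper's proof essentially verbatim: invoke Lemma \ref{lemma:k=6} to discard cases $6,\dots,15$, run Loday's algorithm on cases $1,\dots,5$, show the ideal generated by $[T]$ is the unit ideal in cases $2,4,5$, and read off the one-parameter family from case 1 and the discrete family from case 3. The only divergence is presentational --- you solve the case-1 orthogonality equations $P^tP=I$ by hand rather than quoting the Gr\"obner basis --- and there you should note that $W_2^2+W_3^2=0$ alone does not force $W_2=W_3=0$ over an algebraically closed field; you need the cross-term equations $W_2X_2+W_3X_3=0$, $X_2^2+X_3^2=1$ (and their analogues) to rule out $W_3=\pm iW_2\neq 0$.
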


\begin{proof}
By Lemma \ref{lemma:k=6}, self-dual associative operads exist only in cases $1,\dots,5$.

\noindent $\bullet$
\emph{Case 1:}
We obtain (omitting zero columns from $-T$):
  \begin{align*}
  \left[ \begin{array}{@{}c@{}} R \\ R^\pm \end{array} \right]
  &=
  \left[ 
  \begin{array}{cccccccc}
  1 &   0 &   0 &   0 &  -1 &   0 &   0 &   0 \\[-0.5mm] 
  0 &   1 &   0 &   0 & W_2 & X_2 & Y_2 & Z_2 \\[-0.5mm] 
  0 &   0 &   1 &   0 & W_3 & X_3 & Y_3 & Z_3 \\[-0.5mm] 
  0 &   0 &   0 &   1 &   0 &   0 &   0 &  -1 \\[-0.5mm]
  \midrule
  -1 & W_2 & W_3 & 0 & 1 & 0 & 0 & 0 \\[-0.5mm]
  0 & X_2 & X_3 & 0 & 0 & 1 & 0 & 0 \\[-0.5mm]
  0 & Y_2 & Y_3 & 0 & 0 & 0 & 1 & 0 \\[-0.5mm]
  0 & Z_2 & Z_3 & -1 & 0 & 0 & 0 & 1
  \end{array}
  \right]
  \longrightarrow
  \left[ \begin{array}{@{}r@{}} R \\ T \end{array} \right], 
  \\
  -T 
  &=
  \left[ 
  \begin{array}{@{\,}c@{\quad}c@{\quad}c@{\quad}c@{\,}}
  W_2^2 {+} W_3^2 & W_2 X_2 {+} W_3 X_3 & W_2 Y_2 {+} W_3 Y_3 & W_2 Z_2 {+} W_3 Z_3 
  \\[2pt]
  W_2 X_2 {+} W_3 X_3 & X_2^2 {+} X_3^2 {-} 1 & X_2 Y_2 {+} X_3 Y_3 & X_2 Z_2 {+} X_3 Z_3 
  \\[2pt]
  W_2 Y_2 {+} W_3 Y_3 & X_2 Y_2 {+} X_3 Y_3 & Y_2^2 {+} Y_3^2 {-} 1 & Y_2 Z_2 {+} Y_3 Z_3 
  \\[2pt]
  W_2 Z_2 {+} W_3 Z_3 & X_2 Z_2 {+} X_3 Z_3 & Y_2 Z_2 {+} Y_3 Z_3 & Z_2^2 {+} Z_3^2
  \end{array}
  \right]
  \end{align*}
The entries of $[T]$ generate an ideal $\mathcal{I}$ in the polynomial ring $\Phi[\mathcal{P}]$
where $\mathcal{P}$ is the set of 8 parameters.
The zero set $V(\mathcal{I})$ consists of the parameter values for which 
the relation matrix $[R]$ defines a self-dual operad.
The Gr\"obner basis for $\mathcal{I}$ is
\begin{align*}
&
Z_3, \quad Z_2, \quad W_3, \quad W_2, \quad Y_2^2+Y_3^2-1, \quad X_2Y_2+X_3Y_3, \quad X_3^2+Y_3^2-1,
\\
&
X_2X_3+Y_2Y_3, \quad X_2^2-Y_3^2, \quad X_3Y_2Y_3-X_2Y_3^2+X_2.
\end{align*}
We obtain a one-parameter set of solutions; the signs may be chosen independently:
\begin{align*}
&
W_2 = 0, \quad
W_3 = 0, \quad
X_2 = \pm Y_3, \quad
X_3 = \pm \sqrt{1-Y_3^2},
\\[-1mm]
&
Y_2 = \pm \sqrt{1-Y_3^2}, \quad
Y_3 = \text{free}, \quad
Z_2 = 0, \quad
Z_3 = 0.
\end{align*}
With these values the relation matrix $[R_1]$ takes the indicated form.

\noindent $\bullet$
\emph{Case 2:}
In this case, after deleting the columns which are zero, we obtain
  \[
  [T]
  =
  \left[ 
  \begin{array}{@{\;}c@{\quad}c@{\quad}c@{\quad}c@{\;}}
  W_2^2+1 & W_2 X_2 & W_2 Y_2 & W_2 Z_2 \\
  -W_2 X_2 & -X_2^2+1 & -X_2 Y_2 & -X_2 Z_2 \\
  -W_2 Y_2 & -X_2 Y_2 & -Y_2^2+1 & -Y_2 Z_2 \\
  -W_2 Z_2 & -X_2 Z_2 & -Y_2 Z_2 & -Z_2^2
  \end{array}
  \right]
  \]
The ideal generated by the entries of $[T]$ has Gr\"obner basis $\{1\}$: no solutions.

\noindent $\bullet$
\emph{Case 3:}
After deleting the zero columns, we obtain
  \[
  [T]
  =
  \left[ 
  \begin{array}{@{\;}c@{\quad}c@{\quad}c@{\quad}c@{\;}}
  W_2^2+1 & W_2 X_2 & W_2 Y_2 & W_2 Z_2 \\
  -W_2 X_2 & -X_2^2 & -X_2 Y_2 & -X_2 Z_2 \\
  -W_2 Y_2 & -X_2 Y_2 & -Y_2^2+Y_4^2+1 & -Y_2 Z_2+Y_4 Z_4 \\
  -W_2 Z_2 & -X_2 Z_2 & -Y_2 Z_2+Y_4 Z_4 & -Z_2^2+Z_4^2
  \end{array}
  \right]
  \]
The Gr\"obner basis for the ideal generated by the entries is
\[
Z_4, \quad Z_2, \quad Y_2, \quad X_2, \quad Y_4^2+1, \quad W_2^2+1.
\]
With the zero set of this ideal, the relation matrix $[R_3]$ takes the indicated form.

\noindent $\bullet$
\emph{Cases 4, 5:}
The ideal generated by the entries of $[T]$ has Gr\"obner basis $\{1\}$.
\end{proof}
  
%%%%%%%%%%%%%%%%%%%%%%%%%%%%%%%%%%%%%%%%%%%%%%%%%%%%%%%%%%%%%%%%%%%%%%%%%%%%%%%%%%%%%%%%%%%%%%%%%%%%

\section*{Acknowledgements}

Murray Bremner was supported by a Discovery Grant from NSERC, the Natural Sciences and Engineering
Research Council of Canada.
Juana S\'anchez-Ortega was supported by a CSUR grant from the NRF, the National Research Foundation
of South Africa.
The authors thank Vladimir Dotsenko for pointing out the connection between relation matrices
and Schubert calculus on Grassmannians.

%%%%%%%%%%%%%%%%%%%%%%%%%%%%%%%%%%%%%%%%%%%%%%%%%%%%%%%%%%%%%%%%%%%%%%%%%%%%%%%%%%%%%%%%%%%%%%%%%%%%

\end{document}